\documentclass[12pt]{article}
\usepackage{amssymb,amsmath, amsthm}
\usepackage{color,xcolor}
\setlength{\headsep}{-5pt}
\setlength{\headheight}{-5pt}
\setlength{\textheight}{9in}
\setlength{\oddsidemargin}{5pt}
\setlength{\textwidth}{435pt}
\setlength{\parskip}{8pt plus 2pt minus 1pt}
\setlength{\leftmargini}{5pt}
\newcommand{\eb}{\begin{equation}}
\newcommand{\ee}{\end{equation}}
\newcommand{\ebx}{\begin{equation*}}
\newcommand{\eex}{\end{equation*}}
\newtheorem{lemma}{Lemma}[section]
\newtheorem{proposition}[lemma]{Proposition}
\newtheorem{theorem}[lemma]{Theorem}

\newtheorem{corollary}[lemma]{Corollary}
\newtheorem{definition}[lemma]{Definition}

\allowdisplaybreaks

\makeatletter
\renewcommand*\env@matrix[1][*\c@MaxMatrixCols c]{%
  \hskip -\arraycolsep
  \let\@ifnextchar\new@ifnextchar
  \array{#1}}
\makeatother

\begin{document}

\title{Orthogonal pair and a rigidity problem for Segre maps between hyperquadrics}

\author{Yun Gao\footnote{School of Mathematical Sciences, Shanghai Jiao Tong University, Shanghai, People's Republic of China. \textbf{Email:}~gaoyunmath@sjtu.edu.cn}, }

\maketitle

\begin{abstract}
Being motivated by the orthogonal maps studied in \cite{GN1}, orthogonal pairs  between the projective spaces equipped with  possibly degenerate Hermitian forms were introduced. In addition, orthogonal pairs are generalizations of  holomorphic Segre maps between Segre families of real hyperquadrics. We showed that  non-degenerate holomorphic orthogonal pairs also have certain rigidity properties under a necessary codimension restriction.
As an application, we got a rigidity theorem for  Segre maps related to Heisenberg hypersurfaces obtained by Zhang  in \cite{Zh}.
\end{abstract}

\section{Introduction}
Let $U$ be an open subset in $\mathbb C^n$ and $M$ be a real analytic
hypersurface in $U$ with a defining function $\psi(z,\bar z)$. The
complexification of $M$ is defined as:
 $\mathcal M:=\{(z,\xi)\in U\times  \mbox{Conj}(U): \psi(z,\xi)=0\}$.
  Define the complex analytic varieties $Q_{\xi}:=\{z\in U: \psi(z,\xi)=0\}$
   for $\xi\in \mbox{Conj}(U)$ and  $\hat{Q}_{z}:=\{\xi\in \mbox{Conj}(U): \psi(z,\xi)=0\}$
   for $z\in M$. Then $Q_{\xi}$ and $\widehat{Q}_{z}$ are called  {Segre varieties}
   associated with  $M$. And the complex hypersurface $\mathcal M$ is called
    the { Segre family} of $M$. The term ``family'' comes from the
fact that $\mathcal M$ is foliated by  $\{Q_{\xi}\}$ as $\xi$ varies
over $U$. Segre varieties have been  basic tools in the  study of CR
mappings between real analytic hypersurfaces as originated in the
paper of Webster \cite{We}. (See also [Fa1], [BER], [Hu1], [Hu2],
[Zh],etc, for many related references.)

 Let $\widetilde M$ be also a real analytic hypersurface
 in $\widetilde U\subset\mathbb C^N$ and $\widetilde {\mathcal{M}}$ be its associated
  Segre family.  Let $F$ be a holomorphic map from $\mathcal M$ into
   $\widetilde {\mathcal{M}}$. If for $(z,\xi)\in \mathcal M$,
    there exists $(z',\xi')\in \mathcal M$
    such that $F((\{z\},\hat{Q}_{z}))\subset ( \{z'\},\hat{Q}_{z'})$
    and $F((Q_{\xi},\{\xi\}))\subset (Q_{\xi'},\{\xi'\})$,
    then $F$ is called a \textit{holomorphic Segre map}. It is known that
  $F$ is always of the form
$F(z,\xi)=(f_1(z),f_2(\xi))$ (\cite{Fa2}, (\cite{HJ}, Lemma 6.1)). A basic property  for Segre families is its invariance for
holomorphic maps from $M$ into $\tilde M$.  Namely, a holomorphic
map from $M$ into $\widetilde M$  induces a holomorphic Segre map
from $\mathcal M$ to $\widetilde {\mathcal{M}}$. Segre maps, as well
as  their rigidity problems, have been studied by many authors. In \cite{Zh}, Zhang proved a rigidity theorem for  holomorphic Segre
maps from $\mathcal H^n$ to $ \mathcal H^N$ where $\mathcal H^n$
denotes the Segre family associated to the Heisenberg hypersurface
$\mathbb H^n=\{(z_1,\cdots, z_{n-1},\zeta)\in \mathbb C^n:
Im(\zeta)=\sum_{j=1}^{n-1}z_j\bar z_j\}$.
In
\cite{An2} and \cite{An2}, Angle gave several sufficient conditions insuring that
the Segre maps between the complexification of real analytic
hypersurfaces are Segre transversal or null.

In \cite{GN1}, the authors  studied  various rigidity properties for
orthogonal maps that generalize holomorphic proper maps between
hyperquadrics. To be more specific,
we let $r,s,t\in\mathbb N$ and denote by $\mathbb C^{r,s,t}$ for the
Euclidean space equipped with the standard (possibly degenerate)
Hermitian bilinear form whose eigenvalues are $+1$, $-1$ and $0$
with multiplicities $r$, $s$ and $t$, respectively. Write its
projectivization as $\mathbb P^{r,s,t}:=\mathbb P\mathbb C^{r,s,t}$.
When $t=0$, we use the notation $\mathbb P^{r,s}$ instead of $\mathbb P^{r,s,0}$. 
A local holomorphic map $f$ from $U\subset \mathbb P^{r,s,t}$
into $\mathbb P^{r',s',t'}$ is called an local orthogonal map if  $f(z)\perp f(w)$ for any $z$, $w\in U$ such that $z\perp w$.
Motivated by this definition, we then define a local orthogonal pair from $\mathbb P^{r,s,t}$
into $\mathbb P^{r',s',t'}$ to be a pair of  local holomorphic maps
$f_1$, $f_2$ from $U\subset\mathbb P^{r,s,t}$ into $\mathbb
P^{r',s',t'}$ such that $f_1(z)\perp f_2(w)$ whenever $z\perp w$ for
$z,w\in U$. It is easy to find that $f$ is a local orthogonal map from $\mathbb P^{r,s,t}$ to $\mathbb P^{r',s',t'}$ if and only if $(f,f)$ is a local orthogonal pair.
 In addition, a Segre map is easily seen to be a local
orthogonal pair in this sense. Our main rigidity result is the following theorem.

\begin{theorem}\label{main 2}
Suppose that $r'+s'\le 2(r+s)-3$. Then any local orthogonal pair
from $\mathbb P^{r,s}$ to $\mathbb P^{r',s',t'}$ is either null or
quasi-standard.
\end{theorem}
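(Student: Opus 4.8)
The plan is to convert the orthogonality condition into a single bilinear polynomial identity, reduce it to its non-degenerate part, and then force rigidity by a jet comparison in which the codimension hypothesis produces a Huang-type degeneracy. Throughout write $n=r+s$, let $\langle z,\xi\rangle=z^{\top}I_{r,s}\xi$ denote the complexified source form (with $\xi$ the complexified conjugate variable), and let $\langle u,v\rangle'=u^{\top}Sv$ with $S=\mathrm{diag}(I_{r'},-I_{s'},0_{t'})$ be the target form.

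First I would fix a base point $p_0\in U$ and, using the automorphisms of $\mathbb P^{r,s}$ and $\mathbb P^{r',s',t'}$ that preserve the two forms, normalize $p_0$ and its images under $f_1,f_2$ to convenient standard points with prescribed $1$-jets. Choosing local holomorphic lifts $F_1,F_2$ of $f_1,f_2$, the defining property ($f_1(z)\perp f_2(w)$ whenever $z\perp w$) says exactly that $F_1(z)^{\top}SF_2(\xi)$ vanishes on the Segre relation $\{z^{\top}I_{r,s}\xi=0\}$. Since that quadric is irreducible for $n\ge 2$ (and the hypothesis $r'+s'\le 2n-3$ forces $n\ge 2$), a Nullstellensatz argument yields a holomorphic factor $\lambda$ with
\[
F_1(z)^{\top}SF_2(\xi)=\lambda(z,\xi)\,z^{\top}I_{r,s}\xi .
\]
The two alternatives of the theorem will correspond to $\lambda\equiv 0$ (the \emph{null} case, where the two images pair to zero identically) and $\lambda\not\equiv 0$.

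Because $S$ kills the last $t'$ coordinates, this identity constrains only the first $r'+s'$ components $\widetilde F_1,\widetilde F_2$ of the lifts; the remaining $t'$ components are unconstrained by the pairing and are precisely the freedom permitted in a \emph{quasi-standard} pair. Hence it suffices to analyze the non-degenerate identity $\widetilde F_1(z)^{\top}I_{r',s'}\widetilde F_2(\xi)=\lambda(z,\xi)\langle z,\xi\rangle$ into $\mathbb C^{r',s'}$ with $r'+s'\le 2n-3$. I would then expand $\widetilde F_1,\widetilde F_2,\lambda$ in Taylor series about the base point and compare homogeneous components. The $1$-jets give the linear part; the obstruction to (quasi-)standardness is carried by the second-order coefficients, which play the role of the second fundamental forms of the two image varieties. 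Matching the bidegree $(2,1)$ and $(1,2)$ parts of the identity expresses these quadratic coefficient matrices in terms of the linear part and the low jet of $\lambda$, producing an over-determined linear system coupling the quadratic data of $\widetilde F_1$ and $\widetilde F_2$.

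The crux, and the step I expect to be the main obstacle, is to show that when $r'+s'\le 2n-3$ this system admits only the degenerate solution, so that the quadratic parts vanish and $\widetilde F_1,\widetilde F_2$ are linear (hence the pair is quasi-standard) unless $\lambda\equiv 0$. I would derive this from a Huang-type lemma: a relation among germs of the form $\sum_j A_j(z)B_j(\xi)\equiv 0$ whose number of summands is bounded by the available codimension forces the families $\{A_j\}$ and $\{B_j\}$ into a prescribed linearly degenerate configuration. The delicate features here are that the indefinite signature of $I_{r',s'}$ rules out the positivity shortcuts available for maps into balls, so the bilinear pairing must be handled directly, and that one must propagate the vanishing from second order to all orders by induction. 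Once the quadratic terms are shown to vanish, a routine induction upgrades linearity of the non-null parts to the full quasi-standard normal form, while $\lambda\not\equiv 0$ excludes the null alternative, completing the dichotomy.
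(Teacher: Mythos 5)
Your reduction to the bilinear identity $F_1(z)^{\top}SF_2(\xi)=\lambda(z,\xi)\,z^{\top}I_{r,s}\xi$ is fine, and $\lambda\equiv 0$ does characterize the null case. The gap is at your declared crux. First, discarding the last $t'$ coordinates does not reduce to the quasi-standard dichotomy: in the definition used here the null summand $B$ of the orthogonal splitting $\mathbb C^{r',s',t'}=A\oplus B$ may carry nonzero signature, so after passing to $\widetilde F_1,\widetilde F_2$ into $\mathbb C^{r',s'}$ you still have to \emph{produce} the splitting $A\oplus B$; it is not handed to you by the kernel of $S$. Second, and more seriously, the conclusion you want from the jet comparison --- that the second-order coefficients of $\widetilde F_1,\widetilde F_2$ vanish and these maps are linear --- is false under the stated hypothesis. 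The paper's own example $f_1[z^+,z^-]=[z^+,0,z^-,0]$, $f_2[w^+,w^-]=[\phi w^+,\phi_1,\phi w^-,\phi_2]$ (with $\phi,\phi_1,\phi_2$ homogeneous of the same degree, target $\mathbb P^{r+1,s+1}$, $t'=0$) satisfies $r'+s'=r+s+2\le 2(r+s)-3$ once $r+s\ge 5$, is non-null, and has $f_2$ with non-vanishing quadratic jet. What your bidegree $(2,1)$ and $(1,2)$ comparison can yield is only that the quadratic parts pair to zero against the linear parts through $I_{r',s'}$; turning that ``isotropy'' of the second fundamental form into an orthogonal direct-sum decomposition standard $\oplus$ null is the actual content of the theorem and is missing.

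The missing ingredient cannot be a ``Huang-type lemma'' of the kind you invoke: for an indefinite bilinear pairing, relations $\sum_j A_j(z)B_j(\xi)\equiv 0$ with few summands admit completely arbitrary solutions (take $A_1=A_2$, $B_1=B_2$ with opposite signs), which is exactly how non-trivial null pairs arise, so no positivity-free analogue forces the degenerate configuration you need. Relatedly, the codimension bound $r'+s'\le 2(r+s)-3$ never actually does any work in your outline. In the paper it enters through a Faran-type argument on linear spans: if $f_1$ is not linear, a general line has image spanning dimension $d\ge 2$, orthogonality forces $f_2$ to send $(r+s-3)$-planes into $(r'+s'-d-2)$-planes, and the inequality $r'+s'-d-2\le 2(r+s-3)-1$ triggers a hyperplane-restriction lemma (Lemma~\ref{analysis lemma}) showing the image of $f_2$ degenerates into a proper subspace; one then projects and iterates until a component becomes linear, and only at that stage is a matrix identity $\bar A_1^tH_{r',s'}A_2=\lambda H_{r,s}$ extracted. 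You would need either to carry out that geometric descent or to identify a genuinely new mechanism by which $2(r+s)-3$ constrains your jet system; as written the proposal does not close.
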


Here,  a local orthogonal pair $F=(f_1,f_2)$ is called a null pair
if $f_1(z)\perp f_2(w)$ for any $(z,w)$ in  domain of definition,
namely the Segre transversality fails at each point.   $F$ is called
quasi-standard if it is  a ``direct sum" of two parts, of which one
comes from a linear isometry pair up to a constant from $\mathbb
C^{r,s}$ into $\mathbb C^{r',s',t'}$ and the other is null. A more
precise definition will be given in Section~\ref{local orthogonal
pair}. The theorem above generalizes the following result of Zhang:

\begin{theorem}[\cite{Zh}]
Suppose $N\leq 2n-2$. Then every local Segre map from $\mathcal H^n$
into $\mathcal H^N$ is either null or quasi-standard.
\end{theorem}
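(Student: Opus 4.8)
The plan is to obtain this statement as a direct specialization of Theorem~\ref{main 2}, once the Segre-map picture for Heisenberg families is translated into the orthogonal-pair picture for projective hyperquadrics. The first step is to set up this dictionary. Recall that $\mathbb H^n$ is, via the Cayley transform, CR-equivalent to the unit sphere $\partial\mathbb B^n\subset\mathbb C^n$; homogenizing, the sphere becomes the projective hyperquadric $\{[Z]:\langle Z,Z\rangle=0\}$ in $\mathbb P^n$, where $\langle Z,W\rangle=-Z_0\overline{W_0}+\sum_{j=1}^n Z_j\overline{W_j}$ is a Hermitian form of signature $(n,1)$. Thus the hyperquadric attached to $\mathbb H^n$ is precisely the one living in $\mathbb P^{n,1}$. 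Under this identification the Segre family $\mathcal H^n$ is locally the orthogonality incidence variety $\{([z],[\xi]):\langle\!\langle z,\xi\rangle\!\rangle=0\}$ attached to this form, where $\langle\!\langle\cdot,\cdot\rangle\!\rangle$ is the bilinear extension obtained by treating the conjugate slot $\xi$ as an independent holomorphic variable; concretely, the condition $(z,\xi)\in\mathcal H^n$ is exactly $z\perp\xi$ in $\mathbb P^{n,1}$.

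The second step is to match the two notions of morphism. A local holomorphic Segre map $F$ from $\mathcal H^n$ into $\mathcal H^N$ has, by the structure result recalled in the Introduction, the form $F(z,\xi)=(f_1(z),f_2(\xi))$, and the defining property that $F$ carries the Segre foliation of $\mathcal H^n$ into that of $\mathcal H^N$ says exactly that $f_1(z)\perp f_2(\xi)$ whenever $z\perp\xi$. This is precisely the requirement that $(f_1,f_2)$ be a local orthogonal pair from $\mathbb P^{n,1}$ into $\mathbb P^{N,1}=\mathbb P^{N,1,0}$, as already observed in the Introduction. Hence every local Segre map from $\mathcal H^n$ into $\mathcal H^N$ gives rise to a local orthogonal pair with source parameters $(r,s)=(n,1)$ and target parameters $(r',s',t')=(N,1,0)$.

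It remains to check that the codimension hypothesis of Theorem~\ref{main 2} is met. With the above parameters, $r+s=n+1$ and $r'+s'=N+1$, so the inequality $r'+s'\le 2(r+s)-3$ reads $N+1\le 2(n+1)-3=2n-1$, i.e.\ $N\le 2n-2$, which is exactly the standing hypothesis. Applying Theorem~\ref{main 2} to the orthogonal pair $(f_1,f_2)$, we conclude that it is either null or quasi-standard. Translating back through the dictionary of the first two steps—null orthogonal pair corresponding to the Segre-nontransversal (null) map, and quasi-standard orthogonal pair corresponding to the quasi-standard Segre map—yields the claimed dichotomy for $F$.

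The only substantive work, and the main obstacle, lies in making the dictionary of the first step fully precise: one must verify that the local biholomorphic identification of $\mathcal H^n$ with the orthogonality incidence variety of the signature-$(n,1)$ form is compatible with the Segre foliations on both sides, and that under this identification the intrinsic notions of \emph{null} and \emph{quasi-standard} agree with those of Theorem~\ref{main 2}. Once this compatibility is established the theorem follows at once, with no further estimates required.
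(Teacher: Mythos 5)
Your proposal is correct and follows essentially the same route as the paper: the paper's Lemma~\ref{Segre} is exactly your ``dictionary'' step (identifying $\mathbb H^n$ via the Cayley transform with the null points of $\mathbb P^{1,n}$ and showing a Segre map $(f_1,f_2)$ is a local orthogonal pair), and the theorem is then deduced by the same numerical check $N+1\le 2(n+1)-3$ against Theorem~\ref{main}. The only cosmetic difference is your signature convention $(n,1)$ versus the paper's $(1,n)$, which does not affect the count $r+s=n+1$.
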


Since $f$ is a local orthogonal map from $\mathbb P^{r,s}$ to $\mathbb P^{r',s',t'}$ if and only if $(f,f)$ is a local orthogonal pair, therefore Theorem \ref{main 2} also generalizes the analogous result for orthogonal maps obtained in \cite{GN1}.

Notice that  Segre families of hyperquadrics of any signature are
Segre isomorphic to each other.  The novelty of our theorem is that
we allow the target space to have degenerate bilinear form. Also,
our proof is different and simpler from the one employed in \cite{Zh}. In addition, we can apply our results to orthogonal maps between generalized balls.

Our method of the proof is based on an analysis of  linear subspaces
theory initiated in the paper of Faran(\cite{Fa1}).
 First, the orthogonal
complements (which are well defined on $\mathbb P^{r,s,t}$) are
linear subspaces that are reserved by local orthogonal pairs. Then
the dimension of the linear span of the image of any general
$k$-dimensional linear subspace under $f_1$ controls the
corresponding dimension of the image of its orthogonal complement
under $f_2$ for  $1< k< r+s+t$ and vice versa. This mutual
restriction  between $f_1$ and $f_2$ together with the dimensional
formula obtained in~\cite{GN2} (see Lemma \ref{analysis lemma}) that
is used to estimate the size of the linear spans of the image of a
linear subspace in any dimension, forces the linearity (or
quasi-linearity) of the orthogonal pair under the stated assumptions
for $r,s,r',s'$.


\section{Notations and conventions}
For  non-negative integers $r,s,t$, consider the standard Hermitian
form of signature $(r;s;t)$ on $\mathbb C^{r+s+t}$ whose the
eigenvalues are $1$, $-1$ and $0$ with multiplicities $r$, $s$ and
$t$, respectively, which is represented by the matrix
 $H_{r,s,t}=\begin{pmatrix}
 I_r&0&0\\
 0&-I_s&0\\
 0&0&0_t
 \end{pmatrix}$ under the standard coordinates. Let $n=r+s+t$.  Define the (possibly degenerate) indefinite inner product of signature $(r;s;t)$ on $\mathbb C^{n}$ as
$$
    \langle z, w\rangle_{r,s,t}
    =z_1\bar w_1+\cdots+z_r\bar w_r-z_{r+1}\bar w_{r+1}-\cdots - z_{r+s}\bar w_{r+s},
$$
where $z=[z_1,\ldots,z_{n}]$ and $w=[w_1,\ldots,w_{n}]$. We denote
such a $\mathbb C^{n}$ with the aforementioned  Hermitian inner
product by $\mathbb C^{r,s,t}$ and write its projectivization by
$\mathbb P^{r,s,t}:=\mathbb P\mathbb C^{r,s,t}$. If $\langle z,
w\rangle_{r,s,t}=0$, we say that $z$ is orthogonal to $w$ and write
$z\perp w$. When $z\perp z$, we say that $z$ is a \textit{null
point}.  In addition, we also write the \textit{orthogonal
complement} of $z$ as
$$z^{\perp}=\{w\in  \mathbb C^{r,s,t} \mid \langle z, w\rangle_{r,s,t}=0\}.$$
The inner product $\langle z, w\rangle_{r,s,t}$ does not descend to
$\mathbb P^{r,s,t}$. However, the orthogonality and the orthogonal
complement are well defined over $\mathbb P^{r,s,t}$.

More generally,
let $V$ be an $n$-dimensional complex vector space equipped with an abstract
Hermitian inner product $H_V$ (possibly degenerate or indefinite)
of signature $(r;s;t)$, where $n=r+s+t$. Let
 $\mathbb PV$ be its projectivization.
  The notion of orthogonality can be similarly defined on $\mathbb PV$.
  In addition, a linear isometric isomorphism $F:\mathbb C^{r,s,t}\rightarrow V$
   induces a biholomorphic map $\widetilde F:\mathbb P^{r,s,t}\rightarrow\mathbb PV$.
    For our purpose here, it is sufficient for us to simply identify any
such projective space $\mathbb PV$ with $\mathbb P^{r,s,t}$.

Now let $H$ be a $k$-dimensional complex vector subspace
 in $\mathbb C^{r,s,t}$ and suppose the restriction of
 $H_{r,s,t}$ on $H$ is of signature $(a; b; c)$.
 Then $\mathbb PH \cong \mathbb P^{a,b,c}$.
 We call $\mathbb P H$ an\textit{ $(a,b,c)$-subspace}.
 (When $c=0$, we will simply call $\mathbb PH$ an $(a,b)$-subspace.)
 For brevity,  we denote an $(a,b,c)$-subspace by $H^{a,b,c}$.
 Clearly,
 we have $0\le a\le r$, $0\le b\le s$, $0\le c\le \min\{r-a,s-b\}+t$ and $a+b+c=k$.

In this article, when we say a \textit{general point}, \textit{general line}, etc., we mean our choice of the point or line, etc. is anything outside some finite union of complex analytic subvarieties.


\section{Local orthogonal pair}\label{local orthogonal pair}

For real hyperquadrics in a complex projective space, we note that
Segre maps preserve  Segre varieties which are the orthogonal
complements to points with respect to the orthogonality described
before. Motivated by this property, we  give the following
definition:

\begin{definition}\label{orth}
Let $U\subset\mathbb P^{r,s,t}$ be a connected open set containing a null point
 and let
$f_1$, $f_2$ be two holomorphic maps from $U$ into $\mathbb
P^{r',s',t'}$. We call $F:=(f_1,f_2)$   an  {orthogonal pair} if
$f_1(z)\perp f_2(w)$ for any $z$,$w\in U$ with $z \perp w$. We also
call $F$ a local orthogonal pair from  $\mathbb P^{r,s,t}$ to
$\mathbb P^{r',s',t'}$ .
\end{definition}

 We remark that when  $U$ does not contain any null point,
 the orthogonality condition in the above definition  could be vacuous, for
  there might  not be any pair of orthogonal points in $U$.

In what follows,  $f_1$ or $f_2$ is said to  map a line (or, a
$k$-plane) to a line (respectively, a $k'$-plane) if it maps the
intersection of a line with $U$ (or, a $k$-plane with $U$) into a
line (or  a $k'$-plane, respectively).

For brevity,  we also give  the following definitions.

\begin{definition}\label{iso}
Let $\sigma$ and $\tau$ be two linear maps from $\mathbb C^{r,s,t}$ to $\mathbb
 C^{r',s',t'}$.
 The pair $(\sigma, \tau)$ is called an {conformal pair}
 from $\mathbb C^{r,s,t}$ to $\mathbb
 C^{r',s',t'}$ if there exists  a non-zero $\lambda\in\mathbb C$
  such that $\langle \sigma(z),\tau(w)\rangle_{r',s',t'}=\lambda\langle z,w \rangle_{r,s,t}$
  for any $z, w\in \mathbb C^{r,s,t}$.
\end{definition}

\begin{definition}\label{stand}
Let $F=(f_1,f_2)$ be a  local orthogonal pair from $\mathbb
P^{r,s,t}$ to $\mathbb P^{r',s',t'}$. We call $F$ a {standard pair}
if it is induced by a conformal pair from $\mathbb C^{r,s,t}$ to
$\mathbb C^{r',s',t'}$. We call call $F$ a {null pair} if
$f_1(z)\perp f_2(w)$ for any $z,w $.
\end{definition}

For a non-trivial orthogonal direct sum decomposition $\mathbb
C^{r,s,t}=A\oplus B$, there are two standard projections
$\pi^A:\mathbb P^{r,s,t}\dasharrow\mathbb PA $ and $\pi^B: \mathbb
P^{r,s,t} \dasharrow\mathbb PB$ (as rational maps). For a pair of
holomorphic maps $F=(f_1,f_2)$, we will write $(\pi^A\circ
f_1,\pi^A\circ f_2)$ for $(\pi^A,\pi^A)\circ F$ and write similarly
$(\pi^B,\pi^B)\circ F$  (whenever the compositions are defined as
rational maps). By the same token, for a proper vector subspace
$S\subset\mathbb C^{r',s',t'}$ such that the restriction of the
Hermitian form on $S$ is non-degenerate, we can define unambiguously
the projection $\pi^S:\mathbb P^{r',s',t'}\dasharrow\mathbb PS$, as
$\mathbb C^{r',s',t'}=S\oplus S^\perp$. Thus, we can also define
$(\pi^S,\pi^S)\circ F$ in this case.

An orthogonal pair interacts naturally with an orthogonal decomposition:

\begin{lemma}\label{projection 2}
Let $F=(f_1,f_2)$ be a local orthogonal pair from $\mathbb
P^{r,s,t}$ to $\mathbb P^{r',s',t'}$ and let $\mathbb
C^{r',s',t'}=A\oplus B$ be an orthogonal decomposition.
Assume that the union of the  images of $f_1$ and $f_2$ is not
contained in $\mathbb PA\cup\mathbb PB$. If $(\pi^A,\pi^A)\circ F$
is a local orthogonal pair, then so is $(\pi^B,\pi^B)\circ F$. In
addition, if $(\pi^A,\pi^A)\circ F$ is null, then
$(\pi^B,\pi^B)\circ F$ is null if and only if $F$ is null.
\end{lemma}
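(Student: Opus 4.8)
The plan is to reduce both assertions to one linear-algebra identity coming from the orthogonality of the decomposition $\mathbb C^{r',s',t'}=A\oplus B$. First I would fix local holomorphic lifts $\tilde f_1,\tilde f_2:U\to\mathbb C^{r',s',t'}$ of $f_1,f_2$ and split each according to the decomposition, writing $\tilde f_1=(\tilde a_1,\tilde b_1)$ and $\tilde f_2=(\tilde a_2,\tilde b_2)$ with $\tilde a_i$ the $A$-component and $\tilde b_i$ the $B$-component. Since $A\perp B$, the Gram matrix of $\langle\cdot,\cdot\rangle_{r',s',t'}$ is block-diagonal, so for all $z,w\in U$
\eb
\langle\tilde f_1(z),\tilde f_2(w)\rangle_{r',s',t'}=\langle\tilde a_1(z),\tilde a_2(w)\rangle_{A}+\langle\tilde b_1(z),\tilde b_2(w)\rangle_{B},
\ee
where $\langle\cdot,\cdot\rangle_A,\langle\cdot,\cdot\rangle_B$ are the restricted forms. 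Because $\pi^A\circ f_i$ is represented by $\tilde a_i$ and $\pi^B\circ f_i$ by $\tilde b_i$, the statements ``$(\pi^A,\pi^A)\circ F$ is an orthogonal pair'', ``$(\pi^B,\pi^B)\circ F$ is an orthogonal pair'' and ``$F$ is an orthogonal pair'' translate exactly into the vanishing, on the incidence set $\{z\perp w\}$, of the first summand, the second summand, and the left-hand side, respectively; nullity corresponds to the same vanishings required for all $z,w$.

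Given this, both assertions are short. For the first, $F$ being an orthogonal pair makes the left-hand side vanish on $\{z\perp w\}$, and the hypothesis that $(\pi^A,\pi^A)\circ F$ is an orthogonal pair makes the first summand vanish there; subtracting, $\langle\tilde b_1(z),\tilde b_2(w)\rangle_B$ vanishes on $\{z\perp w\}$, which is precisely the assertion that $(\pi^B,\pi^B)\circ F$ is an orthogonal pair. For the second, the hypothesis that $(\pi^A,\pi^A)\circ F$ is null makes the first summand vanish for all $z,w$, so the identity reduces to $\langle\tilde f_1(z),\tilde f_2(w)\rangle_{r',s',t'}=\langle\tilde b_1(z),\tilde b_2(w)\rangle_B$ for all $z,w$; hence the left-hand side vanishes identically ($F$ null) if and only if the right-hand side does ($(\pi^B,\pi^B)\circ F$ null).

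As is standard for Segre families and orthogonal pairs, I would complexify the conjugate variable so that each of the three expressions above becomes a holomorphic object and the orthogonality/nullity conditions become the vanishing of a holomorphic function on the incidence variety; verifying these on the dense open locus where the chosen lifts are nonvanishing, they then propagate to the required identities by analytic continuation.

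The real obstacle is not the algebra but the bookkeeping of where the rational projections are defined: $(\pi^B,\pi^B)\circ F$ must be a genuine pair of holomorphic maps, i.e. the components $\tilde b_1,\tilde b_2$ must not vanish identically, and likewise the orthogonality condition assumed on the $A$-part must be an honest condition on a nonempty open set. This is exactly the point of the hypothesis that $\mathrm{Im}(f_1)\cup\mathrm{Im}(f_2)$ is not contained in $\mathbb PA\cup\mathbb PB$, together with the standing convention that such compositions are considered only where they define rational maps; it rules out the degenerate configurations in which the splitting trivializes and one of the projected pairs fails to be defined. Once definedness is secured, the displayed identity ``total $=$ $A$-part $+$ $B$-part'' delivers both claims at once.
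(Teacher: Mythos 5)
Your proposal is correct and follows essentially the same route as the paper: the paper also expands $\langle v_{p'},v_{q'}\rangle$ into $A$- and $B$-components, kills the cross terms using $A\perp B$, and obtains both claims by subtracting the $A$-part identity from the total, working pointwise with vector representatives rather than with lifts and analytic continuation. The only cosmetic difference is that your complexification/continuation step is unnecessary, since the pointwise argument on the locus where neither image lies in $\mathbb PA\cup\mathbb PB$ already suffices.
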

\begin{proof}
Let $U$ be the  domain of definition for $F$.  Let $V_1:=U\setminus
f_1^{-1}(\mathbb PA\cup\mathbb PB)$, $V_2:=U\setminus
f_2^{-1}(\mathbb PA\cup\mathbb PB)$ and $V=V_1\cap V_2$. Let $p,q\in
V$ be two orthogonal points and $p'=f_1(p)$, $q'=f_2(q)$.

Let $\ell_{p'}$, $\ell_{q'}$, $\ell^A_{p'}$, $\ell^A_{q'}$,
 $\ell^B_{p'}$ and $\ell^B_{q'}$ be the lines in $\mathbb C^{r',s',t'}$
  corresponding to $p'$, $q'$, $\pi^A(p')$, $\pi^A(q')$, $\pi^B(p')$ and $\pi^B(q')$,
   respectively. Then $\ell_{p'}^A\perp\ell_{q'}^B$ and $\ell_{q'}^A\perp\ell_{p'}^B$ by definition, and by the orthogonality of $F$ and $(\pi^A,\pi^A)\circ F$, we also have $\ell_{p'}\perp\ell_{q'}$ and $\ell^A_{p'}\perp\ell^A_{q'}$.

For  non-zero vectors $v_{p'}\in\ell_{p'}$ and $v_{q'}\in\ell_{q'}$,
write $v_{p'}=v_{p'}^A+v_{p'}^B$ and $v_{q'}=v_{q'}^A+v_{q'}^B$ for
the decompositions with respect to $\mathbb C^{r',s',t'}=A\oplus B$.
Since $p', q'\not\in\mathbb PA\cup\mathbb PB$, it follows that
$v_{p'}^A, v_{p'}^B, v_{q'}^A, v_{q'}^B$ are non-zero. From the
previous paragraph, we know that $v_{p'}\perp v_{q'}$,
$v_{p'}^A\perp v_{q'}^B$,  $v_{p'}^B\perp v_{q'}^A$,  and
$v^A_{p'}\perp v^A_{q'}$. Thus, we have $v^B_{p'}\perp v^B_{q'}$ and
thus $\ell^B_{p'}\perp\ell^B_{q'}$. Hence $(\pi_B,\pi_B)\circ F$ is
an orthogonal pair.

If $(\pi^A,\pi^A)\circ F$ is null, then $v^A_{p'}\perp v^A_{q'}$ for
any $p,q\in V$ (not just orthogonal pair $(p,q)$).
 Therefore, for any $p,q\in V$, we have $v^B_{p'}\perp v^B_{q'}$
  if and only if $v_{p'}\perp v_{q'}$.
\end{proof}

Applying a similar argument, one  also  obtains the following lemma:

\begin{lemma}\label{projection 1}
Let $F:=(f_1,f_2)$ be a local orthogonal pair from $\mathbb P^{r,s,t}$ to $\mathbb P^{r',s',t'}$. If the image of $f_1$ or $f_2$ is contained in an $(a,b)$-subspace $H^{a,b}\subset\mathbb P^{r',s',t'}$, then either $F$ is null or $(\pi,\pi)\circ F$ is a non-null local orthogonal pair, where $\pi$ is the projection from $\mathbb P^{r',s',t'}$ to $H^{a,b}$.
\end{lemma}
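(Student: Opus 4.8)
The plan is to run the same circle of ideas as in Lemma~\ref{projection 2}, specialized to the orthogonal decomposition determined by $H^{a,b}$, the simplification being that one of the two maps already factors through the subspace. Assume without loss of generality that the image of $f_1$ lies in $H^{a,b}$, the case of $f_2$ being entirely symmetric. Write $H\subset\mathbb C^{r',s',t'}$ for the linear subspace with $\mathbb PH=H^{a,b}$. Since $H^{a,b}$ is an $(a,b)$-subspace, the restriction of the Hermitian form to $H$ has signature $(a;b;0)$ and is therefore non-degenerate; hence we obtain an orthogonal decomposition $\mathbb C^{r',s',t'}=H\oplus H^\perp$ and a well-defined projection $\pi:=\pi^H$. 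Because the image of $f_1$ already lies in $\mathbb PH$, we have $\pi\circ f_1=f_1$.

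First I would dispose of the degenerate case. If the image of $f_2$ is contained in $\mathbb P(H^\perp)$, then $f_1(z)\in H$ and $f_2(w)\in H^\perp$ are orthogonal for every $z,w\in U$, so $F$ is null and the conclusion holds. Otherwise $f_2(U)\not\subset\mathbb P(H^\perp)$, and since $U$ is connected and $f_2$ is holomorphic, $f_2^{-1}(\mathbb P(H^\perp))$ is a proper analytic subvariety; thus $\pi\circ f_2$ is defined and holomorphic on the dense open set $U\setminus f_2^{-1}(\mathbb P(H^\perp))$, and there $(\pi,\pi)\circ F=(f_1,\pi\circ f_2)$ makes sense.

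The crux is an elementary identity obtained by decomposing a representative vector $f_2(w)=f_2(w)^H+f_2(w)^{H^\perp}$: since $f_1(z)\in H$ and $f_2(w)^{H^\perp}\in H^\perp$ is orthogonal to $H$, the $H^\perp$-part contributes nothing to the pairing, so that $\langle f_1(z),f_2(w)\rangle_{r',s',t'}=\langle f_1(z),\pi(f_2(w))\rangle_{r',s',t'}$ for all $z,w$ (working with representative vectors throughout). Two consequences follow at once. On the one hand, if $z\perp w$ then orthogonality of $F$ gives $\langle f_1(z),f_2(w)\rangle_{r',s',t'}=0$, whence $\langle f_1(z),\pi(f_2(w))\rangle_{r',s',t'}=0$, so $(\pi,\pi)\circ F$ is a local orthogonal pair. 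On the other hand, the same identity shows that $f_1(z)\perp\pi(f_2(w))$ holds for \emph{all} $z,w$ if and only if $f_1(z)\perp f_2(w)$ holds for all $z,w$; that is, $(\pi,\pi)\circ F$ is null if and only if $F$ is null. Combining these, either $F$ is null or $(\pi,\pi)\circ F$ is a non-null local orthogonal pair, which is the assertion.

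I expect the only point requiring care to be bookkeeping rather than analysis: one must check that $\pi\circ f_2$ is genuinely defined as a rational map on a dense open set, which is precisely where the dichotomy $f_2(U)\subset\mathbb P(H^\perp)$ versus $f_2(U)\not\subset\mathbb P(H^\perp)$ enters, and one must track the projectivization by passing to representative vectors so that the bilinear identity is meaningful. No new estimate is needed; the entire content is that projecting onto a non-degenerate subspace containing the image of $f_1$ leaves the pairing $\langle f_1,f_2\rangle_{r',s',t'}$ unchanged.
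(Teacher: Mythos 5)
Your proof is correct, and it follows essentially the route the paper intends: the paper gives no separate argument for this lemma, stating only that it follows by the same reasoning as Lemma~\ref{projection 2}, and your identity $\langle f_1(z),f_2(w)\rangle_{r',s',t'}=\langle f_1(z),\pi(f_2(w))\rangle_{r',s',t'}$ is precisely that vector-decomposition argument specialized to the case where one component already lands in the non-degenerate subspace $H$. The points you flag for care (non-degeneracy of the form on $H$ giving the orthogonal splitting, and the dichotomy on whether $f_2(U)\subset\mathbb P(H^\perp)$) are exactly the right ones and are handled correctly.
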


We now make the following definition:

\begin{definition}
Let $F:=(f_1,f_2)$ be a local orthogonal pair from $\mathbb
P^{r,s,t}$ into $\mathbb P^{r',s',t'}$. We call $F$ {quasi-standard}
if one of the following scenarios occur:

(1) There exists a vector subspace $A\subset \mathbb C^{r',s',t'}$ such that the image of $f_1$ or $f_2$ is contained in $\mathbb PA$ and $(\pi^A,\pi^A)\circ F$ is standard.

(2) There exists a non-trivial orthogonal decomposition $\mathbb C^{r',s',t'}=A\oplus B$ such that $(\pi^A,\pi^A)\circ F$ is standard and $(\pi^B,\pi^B)\circ F$ is  null.
\end{definition}

\noindent{\textbf{Example.}} If we split the homogeneous coordinates on
$\mathbb P^{r,s}$ as $[z_1,\ldots,z_{r+s}]=[z^+,z^-]$,
where $z^+=[z_1,\cdots, z_r]$, $z^-=[z_{r+1},\cdots, z_{r+s}]$ and let $f_1$ $f_2$ be
 the two holomorphic maps from $\mathbb P^{r,s}$ to $\mathbb P^{r',s'}$
 (with $r<r'$ and $s<s'$) defined by $f_1[z^+,z^-]=[z^+,0,z^-,0]$
  and  $f_2[w^+,w^-]=[\phi[w]w^+,\phi_1[w],\phi[w]w^-,\phi_2[w]]$,
  where $\phi$, $\phi_1$, $\phi_2$ are homogeneous polynomials of the same degree,
   then $(f_1,f_2)$ is quasi-standard.

Write the standard $(r',s')$-subspace of $\mathbb P^{r',s',t'}$ as $\mathbb P^{r',s'}\subset\mathbb P^{r',s',t'}$. From the following lemma, a local orthogonal pair $F$ from $\mathbb P^{r,s,t}$ to $\mathbb P^{r',s',t'}$ naturally gives rise to a local orthogonal pair from $\mathbb P^{r,s,t}$ to $\mathbb P^{r',s'}$ unless the image of $f_1$ or $f_2$ lies entirely in
$(\mathbb P^{r',s'})^\perp\cong\mathbb P^{0,0,t'}$, the set of indeterminacy for the projection.

\begin{lemma}\label{projection}
Let $F$ be a local orthogonal pair from $\mathbb P^{r,s,t}$ to $\mathbb P^{r',s',t'}$ and
 $\pi$, $\pi^{\perp}$ be the projections from $\mathbb P^{r',s',t'}$ to $\mathbb P^{r',s'}$ and $\mathbb P^{0,0,t'}$ respectively. Then, $(\pi,\pi)$ is standard, $(\pi,\pi)\circ F$ is orthogonal and $(\pi^{\perp},\pi^{\perp})\circ F$ is null whenever the compositions are defined. In addition, $F$ is null if and only if $(\pi,\pi)\circ F$ is null.
\end{lemma}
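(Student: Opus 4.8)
The plan is to reduce the entire statement to one algebraic observation: the indeterminacy locus $\mathbb{P}^{0,0,t'}=(\mathbb{P}^{r',s'})^{\perp}$ is precisely the projectivized radical of the Hermitian form on $\mathbb{C}^{r',s',t'}$, so projection onto the non-degenerate summand $S:=\mathbb{C}^{r',s'}$ preserves the inner product exactly. Writing $\mathbb{C}^{r',s',t'}=S\oplus S^{\perp}$ and decomposing vectors as $v=v_S+v_{\perp}$, $u=u_S+u_{\perp}$, the vanishing of the form on the radical $S^{\perp}$ gives the master identity $\langle v,u\rangle_{r',s',t'}=\langle v_S,u_S\rangle_{r',s'}$, which I would invoke throughout.

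First I would record that $(\pi,\pi)$ is standard. Regarding $\pi$ as the linear map $v\mapsto v_S$ from $\mathbb{C}^{r',s',t'}$ onto $S$, the master identity reads $\langle\pi(v),\pi(u)\rangle_{r',s'}=\langle v,u\rangle_{r',s',t'}$, so $(\pi,\pi)$ is a conformal pair with $\lambda=1$, hence standard by Definition \ref{stand}. Next I would fix local holomorphic lifts $\tilde f_1,\tilde f_2:U\to\mathbb{C}^{r',s',t'}\setminus\{0\}$ and split them as $\tilde f_i=\tilde f_i^{\,S}+\tilde f_i^{\,\perp}$, the summands lifting $\pi\circ f_i$ and $\pi^{\perp}\circ f_i$ wherever these are defined. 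For orthogonality of $(\pi,\pi)\circ F$, take $z\perp w$: orthogonality of $F$ gives $\langle\tilde f_1(z),\tilde f_2(w)\rangle=0$, and the master identity turns this into $\langle\tilde f_1^{\,S}(z),\tilde f_2^{\,S}(w)\rangle=0$, i.e.\ $\pi(f_1(z))\perp\pi(f_2(w))$. Nullity of $(\pi^{\perp},\pi^{\perp})\circ F$ is immediate, since the form vanishes identically on $S^{\perp}$, so any two image points there are orthogonal.

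For the final equivalence I would exploit that the master identity holds for every pair, not merely for orthogonal ones: $\langle\tilde f_1(z),\tilde f_2(w)\rangle=\langle\tilde f_1^{\,S}(z),\tilde f_2^{\,S}(w)\rangle$ for all $z,w\in U$. Thus $f_1(z)\perp f_2(w)$ for all $z,w$ holds exactly when $\pi(f_1(z))\perp\pi(f_2(w))$ for all $z,w$, yielding the asserted ``$F$ null iff $(\pi,\pi)\circ F$ null'' in both directions at once.

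The only genuinely delicate point is bookkeeping around the indeterminacy of the rational map $\pi$. I would work on the open set where $\tilde f_1^{\,S}$ and $\tilde f_2^{\,S}$ do not vanish identically; if, say, $\tilde f_1^{\,S}\equiv 0$, then the image of $f_1$ lies in $\mathbb{P}^{0,0,t'}$, the composition $\pi\circ f_1$ is nowhere defined (so the caveat ``whenever the compositions are defined'' applies), and $F$ is automatically null by the master identity. Otherwise $\pi\circ f_1$ and $\pi\circ f_2$ are defined on a dense open subset, and since the pairings are holomorphic in $z$ and anti-holomorphic in $w$, the identity principle lets me pass between ``holds on a dense open set'' and ``holds on all of $U$''. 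The algebra is a one-line consequence of $S^{\perp}$ being the radical; this domain-tracking is the only step requiring care.
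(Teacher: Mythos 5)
Your proof is correct and rests on the same idea as the paper's: the orthogonal decomposition $\mathbb C^{r',s',t'}=\mathbb C^{r',s'}\oplus\mathbb C^{0,0,t'}$ with the second summand equal to the radical of the form. The only difference is presentational --- the paper disposes of the lemma in one line by invoking Lemma~\ref{projection 2} for this decomposition, whereas you unwind that lemma's computation directly (your ``master identity'' $\langle v,u\rangle_{r',s',t'}=\langle v_S,u_S\rangle_{r',s'}$ is exactly what makes the citation work, and it additionally gives the standardness of $(\pi,\pi)$ with $\lambda=1$ for free).
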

\begin{proof} The proof follows by applying the above lemma to
 the projection $\pi:\mathbb C^{r',s',t'}\rightarrow\mathbb
C^{r',s'}$.
\end{proof}

\begin{definition}
Let $F:=(f_1,f_2)$ be a local orthogonal pair from $\mathbb P^{r,s,t}$ to $\mathbb P^{r',s',t'}$. If the dimension of the linear span of the image of $\pi\circ f_1$ or $\pi\circ f_2$ is less than $\dim(\mathbb P^{r,s})$, where $\pi$ is the projection from $\mathbb P^{r',s',t'}$ to $\mathbb P^{r',s'}$, then we say that $F$ is \textbf{degenerate.}
\end{definition}

\begin{proposition}\label{null}
Every degenerate local orthogonal pair is null.
\end{proposition}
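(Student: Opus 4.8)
The plan is to reduce to the case of a nondegenerate target and then convert the problem into a statement about the tensor rank of a single sesquilinear kernel. First I would invoke Lemma~\ref{projection}: writing $\pi$ for the projection to $\mathbb P^{r',s'}$, the pair $(\pi,\pi)\circ F$ is again a local orthogonal pair, $F$ is null if and only if $(\pi,\pi)\circ F$ is null, and degeneracy of $F$ is by definition a condition on $\pi\circ f_1$ and $\pi\circ f_2$. Hence I may assume $t'=0$, so that the target carries a nondegenerate form, and that (after relabelling) the linear span of $f_1(U)$ has projective dimension at most $r+s-2$.

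Next I would choose local holomorphic lifts $F_1,F_2\colon U\to\mathbb C^{r',s'}$ and set $B(z,w)=\langle F_1(z),F_2(w)\rangle_{r',s'}$, which is holomorphic in $z$ and anti-holomorphic in $w$; writing $g(z,w)=\langle z,w\rangle_{r,s}$, the orthogonal-pair condition says exactly that $B$ vanishes on $\{g=0\}$, and $F$ is null precisely when $B\equiv0$. Let $W_1\subset\mathbb C^{r',s'}$ be the linear span of the cone over $f_1(U)$; fixing a basis $e_1,\dots,e_m$ of $W_1$ and writing $F_1=\sum_k a_k e_k$ gives $B(z,w)=\sum_{k=1}^m a_k(z)\,\langle e_k,F_2(w)\rangle$, so that $B$ has tensor rank at most $m=\dim_{\mathbb C}W_1\le r+s-1$. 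Thus it suffices to prove the following rank bound: if $B$ is not identically zero, then the span $A$ of $\{B(z,\cdot)\}_z$ (a space of anti-holomorphic functions of $w$) has dimension at least $r+s$, which contradicts rank $\le r+s-1$ and forces $B\equiv0$.

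To prove the rank bound I would argue locally near a null point $o\in U$, where $\{g=0\}$ is a smooth hypersurface and $g$ (viewed as a function of $(z,\bar w)$) is irreducible since $r+s\ge2$; the analytic Nullstellensatz then gives a factorization $B=g^a\Phi$ with $g\nmid\Phi$ and $a\ge1$. Because $g\nmid\Phi$, for a generic $z^*$ one has $\Phi(z^*,\cdot)\not\equiv0$ and $g(z^*,\cdot)\nmid\Phi(z^*,\cdot)$. The functions $B(z^*,\cdot)$ and $\partial_{z_i}B(z^*,\cdot)$ $(i=1,\dots,r+s)$ all lie in $A$, and I would show these $r+s+1$ functions contain $r+s$ linearly independent ones: a vanishing linear combination, after dividing by the nonzero factor $g(z^*,\cdot)^{a-1}$, yields a divisibility relation $g(z^*,\cdot)\mid \Phi(z^*,\cdot)\cdot \tilde L$ for a linear form $\tilde L=\sum_i\lambda_i\epsilon_i\bar w_i$; since $g(z^*,\cdot)$ is a nonzero linear form in $\bar w$ not dividing $\Phi(z^*,\cdot)$, it must divide $\tilde L$, which pins the coefficients down to the direction $z^*$ and then forces the combination to be trivial. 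Tracking this carefully gives $\dim A\ge r+s$ in every case.

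The main obstacle is this last rank estimate in the mixed holomorphic/anti-holomorphic analytic setting. Two points need care: the local factorization through the irreducible $g$ (so that one may assume $g\nmid\Phi$, which is what makes the divisibility step bite), and the fact that the lifts $F_1,F_2$ are not homogeneous, so the naive Euler-relation shortcut is unavailable. The borderline case, in which $\sum_i z_i^*\,\partial_{z_i}\Phi(z^*,\cdot)$ happens to be a scalar multiple of $\Phi(z^*,\cdot)$, has to be examined separately; there the $\partial_{z_i}B(z^*,\cdot)$ span only an $(r+s-1)$-dimensional subspace of $A$, but $B(z^*,\cdot)$ itself is then independent of them, again yielding dimension $r+s$. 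Once the rank bound is established, the proposition follows at once.
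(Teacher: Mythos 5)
Your proposal is correct in substance but takes a genuinely different route from the paper's. Both proofs begin with the same reduction to $t'=0$ via Lemma~\ref{projection}, but from there the paper argues geometrically: letting $d$ be the dimension of the linear span $\Sigma$ of $f_1(U)$ and $d'$ that of $f_1(H\cap U)$ for a general hyperplane $H$, it notes that if $d'=d$ then orthogonality forces $f_2(\alpha)\in\Sigma^\perp$ for all $\alpha$ near a general point, so $F$ is null, while if $d'<d$ then Lemma~\ref{analysis lemma}($ii$) traps the image of $f_1$ in a $d'$-plane, contradicting the definition of $d$. That argument is about ten lines but outsources the real work to the appendix lemma. You instead recast the statement as a lower bound on the slice rank of the kernel $B(z,w)=\langle F_1(z),F_2(w)\rangle_{r',s'}$: degeneracy caps the rank at $r+s-1$, while the factorization $B=g^a\Phi$ with $g\nmid\Phi$ (legitimate near the null point $o$, where $g$ is irreducible because $dg\neq 0$ at $(o,\bar o)$ and $r+s\ge 2$) plus the first-order jet computation at a generic $z^*$ forces rank at least $r+s$. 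I checked the key linear-algebra step: a relation $\mu B(z^*,\cdot)+\sum_i\lambda_i\partial_{z_i}B(z^*,\cdot)=0$ forces $\tilde L=c\,g(z^*,\cdot)$, hence $\lambda=cz^*$, and then $(ac+\mu)\Phi(z^*,\cdot)+c\sum_i z_i^*\partial_{z_i}\Phi(z^*,\cdot)=0$, so the relation space is at most one-dimensional and the $r+s+1$ functions always span at least $r+s$ dimensions, exactly as you describe, including your borderline case. Your approach is self-contained (no appeal to the hyperplane restriction lemma) at the price of the local factorization and genericity bookkeeping; the paper's is shorter only because Lemma~\ref{analysis lemma} carries the load. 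Two details worth making explicit in a write-up: the derivatives $\partial_{z_i}B(z^*,\cdot)$ lie in your span $A$ because $A$ is finite-dimensional, hence closed under limits of difference quotients; and the genericity claim $g(z^*,\cdot)\nmid\Phi(z^*,\cdot)$ follows because otherwise $\Phi$ would vanish on an open piece of the irreducible hypersurface $\{g=0\}$, contradicting $g\nmid\Phi$.
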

\begin{proof}
Let $F:=(f_1,f_2)$ be an orthogonal pair from a connected open
 set $U\subset\mathbb P^{r,s}$ into $\mathbb P^{r',s',t'}$.

First, we consider the case $t'=0$.
 Without loss of generality, we assume that the linear span of $f_1(U)$ is a $d$-dimensional linear subspace $\Sigma\subset\mathbb P^{r',s'}$, where $d<\dim(\mathbb P^{r,s})=r+s-1$.
 Let $d'$ be the dimension of the linear span of $f_1(H\cap U)$ for a
  general hyperplane $H\subset\mathbb P^{r,s}$. Then $d'\le d< r+s-1$.

 If $d'=d$, then for a general point $\alpha\in U$ such that its orthogonal
  complement is a hyperplane $H$ intersecting $U$,
   the linear span of $f_1(H\cap U)$ is $\Sigma$ and hence $f_2(\alpha)$
   lies in $\Sigma^{\perp}$. Since
   this holds  for each point in a neighborhood of $\alpha$,
   we conclude that $F$ is a null pair by uniqueness of holomorphic functions.

 If $d'<d$, then $d'<\dim(H)$. From Lemma~\ref{analysis lemma}($ii$),
  the linear span of $f_1(U)$ is contained in a $d'$-dimensional linear subspace.
This  contradicts to the assumption that $d$ is the dimension of the
linear span of $f_1(U)$.

Hence we see that when  $t'=0$ and $F$ is degenerate, $F$ is a null
pair.

 When $t'>0$, by Lemma~\ref{projection}, $(\pi,\pi)\circ F$ is a local orthogonal pair
 and $(\pi^{\perp},\pi^{\perp})\circ F$ is null, where $\pi$ and $\pi^{\perp}$ are
 the projections from $\mathbb P^{r',s',t'}$ to $\mathbb P^{r',s'}$ or $\mathbb P^{0,0,t'}$
 respectively.  And $(\pi,\pi)\circ F$ is a null pair from the argument above.
  Therefore $F$ is a null pair.
\end{proof}


From the proposition above, it is easy to get the following result.

\begin{corollary}\label{less}
Let $F:=(f_1,f_2)$ be a local orthogonal pair from $\mathbb P^{r,s}$ to $\mathbb P^{r',s',t'}$.  If $r+s>r'+s'$, then $F$ is null.
\end{corollary}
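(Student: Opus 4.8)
The plan is to deduce Corollary~\ref{less} directly from Proposition~\ref{null} by showing that the numerical hypothesis $r+s>r'+s'$ forces $F$ to be degenerate, so that nullity follows immediately. The key observation is that the notion of degeneracy is defined through the projection $\pi$ onto the standard $(r',s')$-subspace $\mathbb P^{r',s'}\subset\mathbb P^{r',s',t'}$: $F$ is degenerate precisely when the linear span of the image of $\pi\circ f_1$ or of $\pi\circ f_2$ has dimension strictly less than $\dim(\mathbb P^{r,s})=r+s-1$.

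First I would observe that the image of $\pi\circ f_1$ lies in $\mathbb P^{r',s'}$, whose dimension is $r'+s'-1$. Hence the linear span of $\pi\circ f_1(U)$ is contained in $\mathbb P^{r',s'}$ and therefore has dimension at most $r'+s'-1$. Under the hypothesis $r+s>r'+s'$ we have $r'+s'-1<r+s-1=\dim(\mathbb P^{r,s})$, so the span of the image of $\pi\circ f_1$ has dimension strictly less than $\dim(\mathbb P^{r,s})$. By the very definition of degeneracy, this means $F$ is degenerate. Applying Proposition~\ref{null}, $F$ is null, which is the desired conclusion.

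One point requiring a little care is the case $t'=0$, where $\mathbb P^{r',s',t'}=\mathbb P^{r',s'}$ and the projection $\pi$ is simply the identity; here $\pi\circ f_1=f_1$, and the span of $f_1(U)$ still lies in a space of dimension $r'+s'-1<r+s-1$, so the same dimension count applies verbatim. Thus no separate treatment is needed, and the argument is uniform in $t'$.

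I do not anticipate any substantial obstacle, since the entire content is a dimension inequality fed into the already-established Proposition~\ref{null}. The only thing to be scrupulous about is matching the indexing convention for projective dimensions ($\dim\mathbb P^{a,b}=a+b-1$) so that the strict inequality $r'+s'-1<r+s-1$ is correctly derived from $r+s>r'+s'$, and ensuring that the span in question genuinely lives inside $\mathbb P^{r',s'}$ rather than the ambient $\mathbb P^{r',s',t'}$ — which is exactly what composing with $\pi$ guarantees.
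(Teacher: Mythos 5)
Your proof is correct and is exactly the argument the paper intends: the paper gives no explicit proof, stating only that the corollary follows easily from Proposition~\ref{null}, and your dimension count (the span of $\pi\circ f_1(U)$ lies in $\mathbb P^{r',s'}$, so its dimension is at most $r'+s'-1<r+s-1=\dim\mathbb P^{r,s}$, forcing degeneracy) is the evident route. The only edge case left implicit — if the image of $f_1$ lies entirely in the indeterminacy locus $\mathbb P^{0,0,t'}$ so that $\pi\circ f_1$ is undefined, then $F$ is trivially null anyway — is glossed over by the paper as well.
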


The following result provides a tool for us to carry out a reduction
argument.

\begin{lemma}\label{projection 3}
Let $F$ be an orthogonal pair from a connected open set $U\subset\mathbb P^{r,s}$ to $\mathbb P^{r',s'}$. Suppose the linear span of  $f_1(U)$ is an $(a,b,c)$-subspace $H^{a,b,c}\subset\mathbb P^{r',s'}$ such that $(a,b)\neq (0,0)$ and $\dim(H^{a,b,c})\le 2\dim(\mathbb P^{r,s})-2$. Then, either $F$ is null or $(\pi,\pi)\circ F$ is a non-null local orthogonal pair, where $\pi$ is the projection from $\mathbb P^{r',s'}$ to any $(a,b)$-subspace $H^{a,b}$ of $H^{a,b,c}$.
\end{lemma}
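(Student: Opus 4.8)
The plan is to reduce the statement to Lemma~\ref{projection 1} by stripping off the radical of $H^{a,b,c}$. Write $A:=H^{a,b}$ for the chosen nondegenerate complement, $R:=H\cap H^{\perp}$ for the radical (so $\dim_{\mathbb C}R=c$, $R$ is totally isotropic and $R\subseteq H^{\perp}$), and $B:=A^{\perp}$, so that $\mathbb C^{r',s'}=A\oplus B$ orthogonally, $R\subseteq B$, $H=A\oplus R$, and $\pi=\pi^A$. Choosing local lifts and decomposing $\hat f_1=\hat f_1^A+\hat f_1^R$ (the $B$-component lands in $R$ because $f_1(U)\subset\mathbb P H$) and $\hat f_2=\hat f_2^A+\hat f_2^B$, I note that $\hat f_1^A$ spans $A$ and $\hat f_1^R$ spans $R$ since $f_1(U)$ spans $H$. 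The case $c=0$ is exactly Lemma~\ref{projection 1}, so I assume $c\ge 1$. Expanding $\langle\hat f_1(z),\hat f_2(w)\rangle=0$ for $z\perp w$ and using $A\perp B$ gives $\langle\hat f_1^A(z),\hat f_2^A(w)\rangle=-\langle\hat f_1^R(z),\hat f_2^B(w)\rangle$ on $\{z\perp w\}$, so $(\pi,\pi)\circ F$ is orthogonal precisely when this radical cross-term vanishes there.

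The whole lemma then follows once I show that \emph{either $F$ is null or $\hat f_2(w)\perp R$ for every $w$.} Indeed, if $\hat f_2\perp R$ then $\hat f_2^B(w)\in R^{\perp_B}$, so the cross-term $\langle\hat f_1^R(z),\hat f_2^B(w)\rangle$ vanishes identically (for all $z,w$, not merely on $\{z\perp w\}$); hence $(\pi,\pi)\circ F$ is a local orthogonal pair, and moreover $\langle\hat f_1(z),\hat f_2(w)\rangle=\langle\hat f_1^A(z),\hat f_2^A(w)\rangle$ for all $z,w$, which shows that $F$ is null if and only if $(\pi,\pi)\circ F$ is null. Thus in the non-null case $(\pi,\pi)\circ F$ is automatically non-null, and its target $\mathbb P H^{a,b}$ is nondegenerate, as required. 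To produce $\hat f_2\perp R$, recall that $f_2(w)\perp S_w:=\mathrm{span}\,f_1(w^{\perp}\cap U)\subseteq H$ for general $w$; so it is enough to prove $R\subseteq S_w$ for general $w$, since then $\langle\hat f_2(w),\rho\rangle=0$ for all $\rho\in R$ on an open set of $w$, hence identically.

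It remains to establish $R\subseteq S_w$ (or nullity). If $S_w=H$ for general $w$ then $f_2(w)\perp H$, so $f_2(U)\subset\mathbb P H^{\perp}$ and $F$ is null; this is the analogue of the first alternative in Proposition~\ref{null}. Otherwise $S_w\subsetneq H$, and here I would invoke the dimensional formula of Lemma~\ref{analysis lemma}: the lower bound it gives for the dimension of the span of $f_1$ over a general hyperplane, together with the hypothesis $\dim H^{a,b,c}\le 2\dim\mathbb P^{r,s}-2$, forces $S_w$ to be so large that it must already contain the radical direction; otherwise a radical direction missed by $S_w$ would, by the rigidity in Lemma~\ref{analysis lemma}$(ii)$ (as used in Proposition~\ref{null}), be missed by $f_1$ on all of $U$, contradicting that $f_1$ spans $H\supseteq R$. \textbf{This span count is the main obstacle}: the delicate point is the entanglement of the $A$- and $R$-components inside $S_w$---surjectivity of the projection of $S_w$ onto $R$ is not by itself enough, one must extract genuine containment $R\subseteq S_w$---and it is exactly here that the codimension hypothesis and the mutual $f_1$--$f_2$ span control of Lemma~\ref{analysis lemma} are consumed.

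Assembling these pieces yields the stated dichotomy, and the conclusion is independent of the choice of nondegenerate complement $H^{a,b}$: two such complements differ by a homomorphism into the radical $R$, which is invisible to $f_2$ once $\hat f_2\perp R$, so the projected pair $(\pi,\pi)\circ F$ is well defined up to this harmless ambiguity.
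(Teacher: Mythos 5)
There is a genuine gap, and you have flagged it yourself: the entire lemma has been reduced to the claim that $R\subseteq S_w$ for a general $w$, and that claim is neither proved nor, in fact, the right thing to aim for. Having a large span does not force containment of a specific subspace: when $c\ge 1$ a hyperplane $S_w$ of $H^{a,b,c}$ --- the largest a proper subspace can be --- can still meet $R$ in codimension one inside $R$, so no dimension count from Lemma~\ref{analysis lemma} can "force $S_w$ to contain the radical direction." What the paper actually proves in the critical case is the \emph{weaker} statement that $f_2(U)\subseteq H^{a,b,c}$, which already suffices for your cross-term computation: if $\hat f_2(w)\in H=A\oplus R$ then $\hat f_2^B(w)\in R$, and $\langle\hat f_1^R(z),\hat f_2^B(w)\rangle=0$ because $R$ is totally isotropic --- no need for $\hat f_2\perp R$ via $R\subseteq S_w$. (Your target $\hat f_2\perp R$ does follow a posteriori from $f_2(U)\subseteq H$ since $R\subseteq H^\perp$, but your proposed route to it is the wrong one.)

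Concretely, the paper's argument runs a trichotomy on $d(\beta):=\dim S_\beta$ after first using Lemma~\ref{projection 1} to shrink the ambient space to $\mathbb P^{a+c,b+c}$, so that $(H^{a,b,c})^\perp$ equals the radical $Z$ and sits inside $H^{a,b,c}$. If $d(\beta)=\dim H^{a,b,c}$ generically, $F$ is null (this matches your first alternative). If $d(\beta)\le\dim H^{a,b,c}-2$ for all $\beta$, then $d(\beta)\le 2\dim(\beta^\perp)-2$ by the codimension hypothesis and Lemma~\ref{analysis lemma}$(i)$ forces the span of $f_1(U)$ to be too small, a contradiction --- this is where the hypothesis $\dim H^{a,b,c}\le 2\dim\mathbb P^{r,s}-2$ is consumed, not in any containment of $R$. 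In the remaining case $d(\beta)=\dim H^{a,b,c}-1$, one proves via a dimension count in the quotient $\hat H^{a,b,c}/\hat Z$ that the hyperplane $S_\beta\subset H^{a,b,c}$ satisfies $S_\beta^\perp\subseteq H^{a,b,c}$, hence $f_2(\beta)\in H^{a,b,c}$, and one concludes by Lemma~\ref{projection} applied inside $H^{a,b,c}$. Your framework (the decomposition $H=A\oplus R$ and the cross-term identity) is sound, but the step you isolate as "the main obstacle" is exactly the content of the lemma, and the intermediate claim you propose to bridge it is both unproven and stronger than necessary.
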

\begin{proof}
Let $ H^{a+c,b+c}$ be an $(a+c,b+c)$-subspace in $\mathbb P^{r',s'}$
containing $H^{a,b,c}$. From Lemma~\ref{projection 1}, we may, after
replacing $F$ by its composition with the projection to $
H^{a+c,b+c}$, assume that $\mathbb P^{r',s'} =\mathbb P^{a+c,b+c}$.
Fix an $(a,b)$-subspace $H^{a,b}\subset H^{a,b,c}$ and let $\pi$ and
$\pi^{\perp}$ be the projections from $\mathbb P^{a+c,b+c}$ to
$H^{a,b}$ and $(H^{a,b})^\perp$, respectively.

Let $U'\subset U$ be the open subset consisting of $\beta's$ such
that $\beta^\perp\cap U\neq\varnothing$.
 By
Definition~\ref{orth}, $U'$ is non-empty. For $\beta\in U'$, let
$d(\beta)$ be the dimension of the linear span of
$f_1(U\cap\beta^\perp)$. If $d(\beta)=\dim (H^{a,b,c})$, then the
linear span of $f_1(U\cap\beta^\perp)$ is exactly $H^{a,b,c}$. Thus,
$f_2(\beta)\in (H^{a,b,c})^\perp$ by the orthogonality of $F$. If it
is true for a general choice of $\beta$, then we immediately see
that $F$ is null.

If for each $\beta\in U'$, we have $d(\beta)\leq \dim
(H^{a,b,c})-2$, then $d(\beta)\leq 2\dim(\mathbb P^{r,s})-4\leq
2\dim(\beta^\perp)-2$. Since any hyperplane in $\mathbb P^{r,s}$ is
the orthogonal complement of some point, we deduce by
Lemma~\ref{analysis lemma}($i$) that the dimension of the linear
span of $f_1(U)$ is $d+1\leq \dim (H^{a,b,c})-1$, contradicting our
definition of $H^{a,b,c}$.

It remains to handle the case where $d(\beta)=\dim (H^{a,b,c})-1$ for
 a general $\beta\in U'$. In this case, if we let $H(\beta)$ be the
 linear span of $f_1(U\cap\beta^\perp)$, then $H(\beta)$ is a hyperplane in $H^{a,b,c}$.
  We claim that $H(\beta)^\perp\subset H^{a,b,c}$. Assuming
  the claim for the moment, then as $f_2(\beta)\in H(\beta)^\perp\subset H^{a,b,c}$ for a general $\beta$, which implies that $H^{a,b,c}$ contains the image of $f_2$, we see that $F$ is a local orthogonal pair from $\mathbb P^{r,s}$ to $H^{a,b,c}$. The desired result now follows Lemma~\ref{projection}.

To prove the claim,  let $Z:=(H^{a,b,c})^\perp$. Since the ambient space is $\mathbb P^{a+c,b+c}$, it follows easily that $Z\subset H^{a,b,c}$ and $Z$ is indeed just the null space in $H^{a,b,c}$ corresponding to the zero eigenvalue. As $H(\beta)$ is a hyperplane in $H^{a,b,c}$, we have $H(\beta)^\perp\supset (H^{a,b,c})^\perp=Z$ and
$\dim(H(\beta)^\perp)\leq\dim(Z)+1.$ On the other hand,
Let $\hat H^{a,b,c}\subset\mathbb C^{a+c,b+c}$ be the
Hermitian vector subspace of signature $(a,b,c)$ such that
 $\mathbb P\hat H^{a,b,c}=H^{a,b,c}$ and similar notation  for $\hat Z$, $\hat H(\beta)$.
 Now $\hat H(\beta)$ is a hyperplane in $\hat H^{a,b,c}$ and thus its
 image under the canonical projection to the quotient space $\hat H^{a,b,c}/\hat Z$
 is either full or of codimension 1. This implies that $\hat H(\beta)^\perp=
 (\hat H^{a,b,c})^\perp=\hat Z$ and $\hat H(\beta)^\perp\varsupsetneq\hat Z$
  with $\dim(\hat H(\beta)^\perp\cap\hat H^{a,b,c})=\dim(\hat Z)+1$. Combining these
   with the inequality $\dim(H(\beta)^\perp)\leq\dim(Z)+1$ obtained earlier,
   it follows that we always have $\hat H(\beta)^\perp\subset\hat H^{a,b,c}$
   and hence the claim follows.
\end{proof}

\noindent{\textbf{Remark.}} When $\dim(H^{a,b,c})>2\dim(\mathbb P^{r,s})-2$, the conclusion of Lemma~\ref{projection 3} may not hold. For example, let $f_1$ and $f_2$ be the holomorphic maps from $\mathbb P^{2,2}$ to $\mathbb P^{3,4}$ defined as follows
$$f_1[z_1,z_2,z_3,z_4]=[z_1^2,z_2^2,z_1z_2,z_3^2,z_4^2,\sqrt 2z_3z_4,z_1z_2]$$
$$f_2[w_1,w_2,w_3,w_4]=[w_1^2,w_2^2,w_1w_2,w_3^2,w_4^2,\sqrt 2w_3w_4,-w_1w_2].$$
Then $(f_1,f_2)$ is an orthogonal pair and the linear span of $f_1(\mathbb P^{2,2})$ is an $(2,3,1)$-subspace $H^{2,3,1}$ in $\mathbb P^{3,4}$. But $(\pi\circ f_1,\pi\circ f_2)$ is not an orthogonal pair, where $\pi$ is the projection from $\mathbb P^{3,4}$ to $H^{2,3}$.


\begin{lemma}\label{linear}
Let $F=(f_1,f_2)$ be a local orthogonal pair from $\mathbb P^{r,s}$ to $\mathbb P^{r',s'}$ and suppose $\dim(\mathbb P^{r,s})=\dim(\mathbb P^{r',s'})$. If $f_1$ or $f_2$ is linear, then $F$ is either null or standard.
\end{lemma}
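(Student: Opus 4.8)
The plan is to reduce to the case where $f_1$ is the linear map and then reconstruct $f_2$ from $f_1$ by means of the adjoint with respect to the two Hermitian forms. Since orthogonality is symmetric (as $\langle z,w\rangle_{r,s}=0$ iff $\langle w,z\rangle_{r,s}=0$), the pair $(f_2,f_1)$ is again a local orthogonal pair, so I may assume without loss of generality that $f_1$ is linear and lift it to a linear map $\sigma:\mathbb C^{r,s}\to\mathbb C^{r',s'}$. Because $\dim(\mathbb P^{r,s})=\dim(\mathbb P^{r',s'})$ gives $r+s=r'+s'$, the map $\sigma$ is between vector spaces of equal dimension, hence is either singular or an isomorphism. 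If $\sigma$ is singular, its image spans a proper subspace, so the linear span of $f_1(U)$ has dimension strictly less than $\dim(\mathbb P^{r,s})$; thus $F$ is degenerate and Proposition~\ref{null} shows that $F$ is null. It therefore remains to treat the case where $\sigma$ is a linear isomorphism.

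In that case, since both $H_{r,s}$ and $H_{r',s'}$ are non-degenerate, there is a unique $\mathbb C$-linear adjoint $\sigma^{\dagger}:\mathbb C^{r',s'}\to\mathbb C^{r,s}$ characterized by $\langle\sigma(z),v\rangle_{r',s'}=\langle z,\sigma^{\dagger}(v)\rangle_{r,s}$ for all $z$ and $v$, and it is again an isomorphism. I would then set $\tau:=(\sigma^{\dagger})^{-1}$, so that $\langle\sigma(z),\tau(w)\rangle_{r',s'}=\langle z,w\rangle_{r,s}$ for all $z,w$, making $(\sigma,\tau)$ a conformal pair with $\lambda=1$. The task is to show $f_2$ is the projective map induced by $\tau$. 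Fix a point $w$ in the open set $U'=\{w\in U:\ w^{\perp}\cap U\neq\varnothing\}$, which is nonempty because any null point of $U$ lies in its own orthogonal complement. For such $w$, the set $U\cap w^{\perp}$ is a nonempty open subset of the hyperplane $w^{\perp}$, so its image $\sigma(U\cap w^{\perp})$ spans the hyperplane $\sigma(w^{\perp})$. The orthogonality condition $f_1(z)\perp f_2(w)$ for $z\in U\cap w^{\perp}$ then forces $f_2(w)\perp\sigma(w^{\perp})$. Since the target form is non-degenerate, the orthogonal complement of the hyperplane $\sigma(w^{\perp})$ is exactly one-dimensional, and the identity $\langle\sigma(z),\tau(w)\rangle_{r',s'}=\langle z,w\rangle_{r,s}=0$ for $z\in w^{\perp}$ shows that the nonzero vector $\tau(w)$ already lies in that line. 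Hence $f_2(w)=[\tau(w)]$ for every $w\in U'$, and by the uniqueness of holomorphic maps $f_2$ is induced by $\tau$ on all of $U$, so $F$ is standard.

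The step I expect to be most delicate is the one pinning down $f_2(w)$ completely: I must verify that the orthogonal complement of the hyperplane $\sigma(w^{\perp})$ collapses to a single line, which is precisely where the non-degeneracy of $H_{r',s'}$ (that is, $t'=0$) enters, guaranteeing $\dim W+\dim W^{\perp}=r'+s'$ with no defect term. I must also make sure the argument runs for $w$ ranging over the open dense set $U'$, so that the pointwise identification $f_2=[\tau]$ can be upgraded to an equality of holomorphic maps by the identity theorem, and that $\tau(w)\neq0$ for $w\neq0$, which holds because $\tau$ is an isomorphism.
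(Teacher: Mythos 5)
Your proof is correct, but it takes a genuinely different route from the paper's. The paper first uses Proposition~\ref{null} to get non-degeneracy (so $f_1$ is a linear biholomorphism), then observes that orthogonality forces $f_2$ to map lines to lines (lines being orthogonal complements of codimension-two planes), invokes Lemma~\ref{analysis lemma}($ii$) to conclude $f_2$ is itself a linear biholomorphism, and finally derives the matrix identity $\bar A_1^tH_{r',s'}A_2=\lambda H_{r,s}$ from the fact that two non-degenerate sesquilinear forms with the same zero set are proportional. You instead construct the candidate partner $\tau=(\sigma^{\dagger})^{-1}$ explicitly and pin down $f_2(w)$ pointwise as the unique line orthogonal to the hyperplane $\sigma(w^{\perp})$, which lets you bypass both the line-preservation step and Lemma~\ref{analysis lemma} entirely; the only shared ingredient is Proposition~\ref{null} for the degenerate/null case. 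Your argument is arguably more self-contained and yields the normalization $\lambda=1$ for free (the paper's $\lambda$ is anyway absorbed by the choice of lift of $f_1$). Two small points to tighten: in the reduction to the case that $f_1$ is linear, note explicitly that standardness of $(f_2,f_1)$ transfers to $(f_1,f_2)$ because $\langle\tau(w),\sigma(z)\rangle=\lambda\langle w,z\rangle$ conjugates to $\langle\sigma(z),\tau(w)\rangle=\bar\lambda\langle z,w\rangle$; and you do not need $U'$ to be dense --- a nonempty open subset of the connected set $U$ already suffices for the identity theorem.
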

\begin{proof}
Suppose $F$ is not null and $f_1$ is linear. Then by Proposition~\ref{null}, $F$ is
 non-degenerate. Thus, $f_1$ is a linear biholomorphism. By the orthogonality of $F$,
  it follows immediately that $f_2$ maps lines to lines and hence it is also a
  linear biholomorphism since $F$ is non-degenerate (Lemma~\ref{analysis lemma}($ii$)).
   Let $A_1$ and $A_2$ be two matrices representing $f_1$ and $f_2$ respectively under
    the standard homogeneous coordinates of $\mathbb P^{r,s}$ and $\mathbb P^{r',s'}$.
    By the orthogonality of $F$, for any $u,v\in\mathbb C^{r,s}$, we have
    $\bar u^t\bar A_1^tH_{r',s'}A_2v=0$ if and only if $\bar u^tH_{r,s}v=0$, where $H_{r',s'}$ and $H_{r,s}$ are the Hermitian matrices representing the inner products in $\mathbb C^{r',s'}$ and $\mathbb C^{r,s}$. This implies that $\bar A_1^tH_{r',s'}A_2=\lambda H_{r,s}$ for some non-zero $\lambda\in\mathbb C$ and hence $F$ is standard.
\end{proof}

We are now ready to prove our main theorem.

\begin{theorem}\label{main}
If $r'+s'\le 2\dim(\mathbb P^{r,s})-1$, every local orthogonal pair from $\mathbb P^{r,s}$ to $\mathbb P^{r',s',t'}$ is either null or quasi-standard.
\end{theorem}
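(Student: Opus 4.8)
The plan is to peel off all the ``null'' and degenerate directions with the projection lemmas, reduce to a non-degenerate equidimensional pair, and then force the first map to be linear so that Lemma~\ref{linear} applies. If $F$ is null we are done, so assume $F$ is non-null. Writing the orthogonal decomposition $\mathbb{C}^{r',s',t'}=\mathbb{C}^{r',s'}\oplus\mathbb{C}^{0,0,t'}$, Lemma~\ref{projection} shows that $G:=(\pi,\pi)\circ F$ is a non-null local orthogonal pair into $\mathbb{P}^{r',s'}$ while $(\pi^{\perp},\pi^{\perp})\circ F$ is null. Because the discarded radical is null, case (2) of the definition of quasi-standard together with Lemma~\ref{projection 2} lets me enlarge any null summand produced for $G$ by the radical; hence it suffices to prove the theorem for $t'=0$. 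By Corollary~\ref{less} I may assume $r+s\le r'+s'$, and by Proposition~\ref{null} the pair $G$ is non-degenerate, so $\dim\mathrm{span}(g_i(U))\ge m:=\dim\mathbb{P}^{r,s}$ for $i=1,2$, while the hypothesis reads $\dim\mathbb{P}^{r',s'}\le 2m-2$.

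The engine of the proof is a dual dimension count. For a general $k$-plane $P\subset\mathbb{P}^{r,s}$ the orthogonal complement $P^{\perp}$ has projective dimension $m-1-k$, and every point of $P$ is orthogonal to every point of $P^{\perp}$; hence $\mathrm{span}(g_1(P))\perp\mathrm{span}(g_2(P^{\perp}))$. Setting $a_k:=\dim\mathrm{span}(g_1(P))$ and $b_k:=\dim\mathrm{span}(g_2(P))$ for general $P$, non-degeneracy of the ambient form gives the coupling
$$
a_k+b_{m-1-k}\le r'+s'-2\le 2m-3,\qquad b_k+a_{m-1-k}\le 2m-3,
$$
for $0\le k\le m-1$, with endpoints $a_0=b_0=0$ and $a_m,b_m\ge m$. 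Applied inside a general $(k+1)$-plane, Lemma~\ref{analysis lemma}(i) shows the span grows by exactly one, $a_{k+1}=a_k+1$, whenever $a_k\le 2k-2$, and similarly for $b$. In terms of the excesses $c_k:=a_k-k$ and $c'_k:=b_k-k$ this says each sequence is non-decreasing, starts at $0$, can increase at a step $k\to k+1$ only when $c_k\ge k-1$ (so an excess can first appear only at $k\le 2$ and freezes once $c_k\le k-2$), while the coupling becomes $c_k+c'_{m-1-k}\le m-2$.

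I would then argue that both excess sequences vanish identically. The freezing property confines the nonlinearity of $g_1$ and of $g_2$ to the bottom of the flag, and the coupling trades control of the bottom of one sequence against the top of the other; iterating this and invoking the endpoints $c_0=c'_0=0$ pins down $a_k=b_k=k$ for all $k$. In particular $a_1=1$, so $g_1$ sends a general line to a line, whence $g_1$ is linear by Lemma~\ref{analysis lemma}(ii); moreover $a_m=m$, so $\mathrm{span}(g_1(U))$ is an $m$-dimensional subspace (it cannot be totally isotropic, as an isotropic subspace of $\mathbb{P}^{r',s'}$ has dimension at most $\min(r',s')-1<m$). Projecting the target onto the non-degenerate $(a,b)$-part of this span via Lemma~\ref{projection 1} or Lemma~\ref{projection 3}, I reach an equidimensional pair to which Lemma~\ref{linear} applies and yields a standard pair; reassembling the projections of the previous steps, each complementary summand being null, shows that the original $F$ is quasi-standard.

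I expect the genuine difficulty to be the combinatorial step that turns the coupled inequalities and the freezing property into $c_k\equiv c'_k\equiv 0$, in particular the extremal case $r'+s'=2m-1$. There the bound $c_{m-1}\le m-2$ is sharp and does not by itself push $a_{m-1}$ below the threshold $2m-4$ required to run Lemma~\ref{analysis lemma}(i) at the top step, so the downward propagation cannot be carried out for $g_1$ and $g_2$ in isolation but must be interleaved through the coupling. Managing this tight interaction---together with the routine but necessary bookkeeping of the degenerate-span and span-collapsing cases through Lemmas~\ref{projection 1}--\ref{projection 3}---is where the real work lies.
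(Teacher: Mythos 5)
Your reduction to $t'=0$ and your endgame (project onto the non-degenerate part of the span and invoke Lemma~\ref{linear}) match the paper, but the engine you propose cannot work, and you have correctly sensed where: the combinatorial step you defer is not merely difficult, it is false. You aim to show that both excess sequences vanish, i.e.\ that \emph{both} $g_1$ and $g_2$ map general lines to lines and hence are linear. The paper's own example of a quasi-standard pair refutes this: take $f_1[z^+,z^-]=[z^+,0,z^-,0]$ and $f_2[w^+,w^-]=[\phi(w)w^+,\phi_1(w),\phi(w)w^-,\phi_2(w)]$ with $\deg\phi\ge 1$; for suitable signatures (e.g.\ $r=s=3$, $r'=s'=4$, so that $r'+s'=8\le 2\dim(\mathbb P^{3,3})-1=9$) this is a non-null, non-degenerate orthogonal pair satisfying the hypothesis in which $f_2$ sends a general line to a conic, so $b_1=2$. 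Your coupling inequality $a_k+b_{m-1-k}\le r'+s'-2$ is satisfied here (with $a_k=k$ it only gives $b_1\le 3$), so no amount of interleaving the two sequences can force $b_k=k$. The conclusion of the theorem is intrinsically asymmetric: in a quasi-standard pair only one component need be linear (and only after projection), while the nonlinearity of the other is absorbed into the null summand.

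The paper's proof exploits exactly this asymmetry. If $f_1$ is not linear, a general line has image spanning a $d$-plane with $d\ge 2$, so by orthogonality $f_2$ maps $(r+s-3)$-planes into $(r'+s'-d-2)$-planes; since $r'+s'-d-2\le 2(r+s-3)-1$, Lemma~\ref{analysis lemma}($i$) forces the image of $f_2$ into a proper subspace $H$, and Lemma~\ref{projection 3} then allows a projection onto an $(a,b)$-subspace of $H$, strictly decreasing the target dimension while preserving all hypotheses. Iterating, one of the two projected components must eventually become linear (the target dimension cannot drop below $\dim\mathbb P^{r,s}$ by non-degeneracy), and only then do Lemma~\ref{linear} and Corollary~\ref{less} finish the argument. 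To salvage your flag-theoretic bookkeeping you would have to replace the goal $c_k\equiv c'_k\equiv 0$ by the dichotomy ``either one of the two maps has $c_1=0$, or the image of the other lies in a proper subspace and one can project,'' which is precisely the structure of the paper's induction.
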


\begin{proof}
As in Proposition~\ref{null}, we only need to prove the result for the
 case $t'=0$.
  Let $F=(f_1,f_2)$ be a local orthogonal pair from $U\subset\mathbb P^{r,s}$ into
   $\mathbb P^{r',s'}$ and $F$ is not null. From Proposition~\ref{null}, $F$ is non-degenerate.

The hypothesis is equivalent to that $\dim(\mathbb P^{r',s'})\leq
2\dim(\mathbb P^{r,s})-2$ and so everything is trivial unless
$\dim(\mathbb P^{r,s})\geq 2$.  We will consider the case
$\dim(\mathbb P^{r,s})\ge 3$ first and handle the two dimensional
case later.

If $f_1$ is not linear, then the dimension (denoted by $d$) of the linear span
$S(L)$ of $f_1(U\cap L)$ for a general line $L\subset\mathbb P^{r,s}$ intersecting $U$
 is at least 2. Note that $\dim(L^\perp)=\dim(\mathbb P^{r,s})-2=r+s-3\geq 1$
 and $\dim(S(L)^\perp)=\dim(\mathbb P^{r's'})-d-1=r'+s'-d-2$ for a general line $L$.
 In addition, since any $(r+s-3)$-plane in $\mathbb P^{r,s}$ is the orthogonal
  complement of some line (for the Hermitian form is non-degenerate), we see that $f_2$
  maps $(r+s-3)$-planes into $(r'+s'-d-2)$-planes by the orthogonality of $F$. As
$$
    r'+s'-d-2\leq r'+s'-4\leq 2(r+s)-7=2(r+s-3)-1,
$$
Lemma~\ref{analysis lemma}($i$) implies that the image of $f_2$ is
contained
 in a proper linear subspace $H\subset\mathbb P^{r',s'}$ such
 that $\dim(H)\leq r'+s'-d\leq\dim(\mathbb P^{r',s'})-1$.

Suppose $H\cong\mathbb P^{a,b,c}$ for some $a,b,c$.
 Then $c\leq \min\{r',s'\}\leq\dim(\mathbb P^{r,s})-1$
  since $r'+s'\le 2\dim(\mathbb P^{r,s})-1$. From this we see that
   $(a,b)\neq (0,0)$ since otherwise it would contradict the fact that $F$ is non-degenerate.
As $\dim(H)\leq \dim(\mathbb P^{r',s'})-1\leq 2\dim(\mathbb P^{r,s})-3$, by Lemma~\ref{projection 3} $F_K:=(\pi^K,\pi^K)\circ F$ is a non-null local orthogonal pair, where $\pi^K: \mathbb P^{r',s'}\dasharrow K$ is the projection to any choice of $(a,b)$-subspace $K\subset H$.

The hypotheses of the theorem still hold for $F_K$. We can therefore
 repeat the arguments and deduce inductively that
 there exists some $(r'',s'')$-subspace $\Phi\subset\mathbb P^{r',s'}$
  with projection $\pi_\Phi:\mathbb P^{r',s',t'} \dasharrow\Phi$,
  such that $F_\Phi:=(\pi^\Phi\circ {f_1},\pi^\Phi\circ {f_2})$ is a non-null
   local orthogonal pair of which at least one component, say $\pi^\Phi\circ {f_1}$,
   is linear.  Combining with Proposition~\ref{null} and Lemma~\ref{projection 3}, we may further assume that $\dim(\Phi)=\dim(\mathbb P^{r,s})$ and $\pi^\Phi\circ {f_1}$ is a linear biholomorphism.

Here we note that the previous conclusion is also true when $\dim(\mathbb P^{r,s})=2$ (from which the hypothesis implies $\dim(\mathbb P^{r',s'}) \le 2$). It is because in this case if we replace a general line $L$ in the argument above by a general point, we see immediately
by orthogonality that $f_2$ maps lines to lines and hence by non-degeneracy $f_2$ is a linear biholomorphism onto $\mathbb P^{r',s'}$.

Lemma~\ref{linear} now implies that $F_\Phi$ is standard (and $F$ itself is standard when $\dim(\mathbb P^{r,s})=2$). If $\dim(\mathbb P^{r,s})\geq 3$, we have $\dim(\Phi^\perp)\leq\dim(\mathbb P^{r,s})-3$ since $\dim(\mathbb P^{r',s'})\leq 2\dim(\mathbb P^{r,s})-2$ by hypothesis. Hence, if the images of $f_1$ and $f_2$ are not contained in $\Phi$, then $(\pi^{\Phi^\perp}, \pi^{\Phi^\perp})\circ F$ is defined and null by Corollary~\ref{less}. Therefore, $F$ is quasi-standard.
\end{proof}

\section{Segre map}

In \cite{Zh}, Zhang considered the holomorphic Segre maps from the Segre family $\mathcal H^n$ of the Heisenberg hypersurface in $\mathbb C^n$ defined by $\mathbb H^n=\{(z_1,\cdots, z_{n-1},\zeta)\in \mathbb C^n: Im(\zeta)=\sum_{j=1}^{n-1}z_j\bar z_j\}$. It is well-known that under the Cayley transform, $\mathbb H^n$ is the intersection of the set of null points in $\mathbb P^{1,n}$ with an affine coordinate chart.

\begin{lemma}\label{Segre}
A local holomorphic Segre map from $\mathcal H^n$ into $\mathcal H^N$ is a local orthogonal pair from $\mathbb P^{1,n}$ to $\mathbb P^{1,N}$.
\end{lemma}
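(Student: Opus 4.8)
The plan is to convert the defining (Segre) condition of the map into the Hermitian orthogonality condition of Definition~\ref{orth}, using the Cayley transform already recalled above together with the elementary observation that, for a projective hyperquadric, a Segre variety is exactly a projective orthogonal complement. First I would make the identifications explicit. Under the Cayley transform $\mathbb{H}^n$ becomes the set of null points of $\mathbb{P}^{1,n}$ in an affine chart, so the defining function may be taken to be $\psi(z,\bar z)=\langle z,z\rangle_{1,n}$. Complexifying $\bar z\mapsto\xi$ produces the bilinear polynomial $\Psi(z,\xi)=\sum_j\epsilon_j z_j\xi_j$ (the bilinearization of $\langle\cdot,\cdot\rangle_{1,n}$, with $\epsilon_1=1$ and $\epsilon_j=-1$ otherwise), whence the Segre family is identified with the incidence variety $\{(z,\xi):\Psi(z,\xi)=0\}\subset\mathbb{P}^{1,n}\times\mathrm{Conj}(\mathbb{P}^{1,n})$; equivalently $Q_\xi=\mathbb{P}(\xi^\perp)$ and $\hat Q_z=\mathbb{P}(z^\perp)$. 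The same description applies to $\mathcal{H}^N$ with the bilinearization $\Psi'$ of $\langle\cdot,\cdot\rangle_{1,N}$.

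Next I would invoke the structural fact cited in the introduction (\cite{Fa2}, \cite{HJ}) that a holomorphic Segre map has the form $F(z,\xi)=(f_1(z),f_2(\xi))$ with $f_1$, $f_2$ holomorphic, and that $F$ carrying $\mathcal{H}^n$ into $\mathcal{H}^N$ (preserving the two foliations) is equivalent to the implication $\Psi(z,\xi)=0\ \Rightarrow\ \Psi'(f_1(z),f_2(\xi))=0$. This is the only input about Segre maps that the argument needs.

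The final step converts this into an orthogonal pair. Using the standard anti-holomorphic identification $\mathrm{Conj}(\mathbb{P}^{1,n})\cong\mathbb{P}^{1,n}$, $\xi\mapsto\bar\xi=:w$, set $g_2(w):=\overline{f_2(\bar w)}$, which is genuinely holomorphic in $w$. A direct bookkeeping computation then gives $\langle f_1(z),g_2(w)\rangle_{1,N}=\Psi'(f_1(z),f_2(\bar w))$ and $\langle z,w\rangle_{1,n}=\Psi(z,\bar w)$, so the implication above reads exactly $z\perp w\ \Rightarrow\ f_1(z)\perp g_2(w)$. Since the domain meets the null quadric (the image of $\mathbb{H}^n$), it contains a null point, and therefore $(f_1,g_2)$ — which is the representative of $F$ under the conjugate-space identification — satisfies Definition~\ref{orth} and is a local orthogonal pair from $\mathbb{P}^{1,n}$ to $\mathbb{P}^{1,N}$.

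I expect the only real obstacle to be the conjugation bookkeeping: one must match the anti-holomorphic second slot of the Hermitian form with the holomorphic second variable $\xi$ of the Segre family, and verify that the identification $\xi=\bar w$ turns the a priori anti-holomorphic dependence on $w$ back into an honest holomorphic map $g_2$. Once the dictionary ``Segre variety $=$ projective orthogonal complement'' and the substitution $\xi=\bar w$ are set up carefully, the orthogonality condition falls out with no further computation.
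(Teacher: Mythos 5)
Your proposal is correct and takes essentially the same route as the paper's proof: identify the Segre varieties $Q_\xi$ and $\hat Q_z$ with projective orthogonal complements in $\mathbb P^{1,n}$, and observe that the foliation-preserving property $f_1(Q_\xi)\subset\hat Q_{f_2(\xi)}$, $f_2(\hat Q_z)\subset Q_{f_1(z)}$ translates directly into $z\perp w\Rightarrow f_1(z)\perp f_2(w)$. The only difference is that you spell out the conjugation bookkeeping ($\xi=\bar w$, $g_2(w)=\overline{f_2(\bar w)}$) that the paper leaves implicit in its identification $Q_\xi\cong\{z:\langle z,\xi\rangle_{1,n}=0\}$.
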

\begin{proof}
The Segre family $\mathcal H^n$ is holomorphically foliated by $Q_{\xi}\times \{\xi\}$ and $z\times \hat Q_{z}$, where
$$Q_{\xi}\times \{\xi\}\cong \{z\in \mathbb P^{1,n}: \langle z,\xi\rangle_{1,n}=0\}$$
$$z\times \hat Q_{z}\cong \{\xi \in \mathbb P^{1,n}: \langle z,\xi\rangle_{1,n}=0\}.$$
A local holomorphic Segre map $F=(f_1,f_2)$ from $\mathcal H^n$ into $\mathcal H^N$ satisfies $f_1(
Q_\xi)\subset \hat Q_{f_2(\xi)}$ and $f_2(
\hat Q_z)\subset Q_{f_1(z)}$.

Hence, $f_1$ maps $\xi^{\perp}$ into $(f_2(\xi))^{\perp}$ and $f_2$
maps $z^{\perp}$ into $(f_2(z))^{\perp}$.  Therefore $$\langle
f_1(z),f_2(w)\rangle_{1,N}=0$$ for any $z$, $w\in \mathbb P^{1,n}$
such that $\langle z,w\rangle_{1,n}=0$. Hence $F$ is a local
orthogonal pair from $\mathbb P^{1,n}$ to $\mathbb P^{1,N}$.
\end{proof}

The following rigidity of the Segre maps from $\mathcal H^n$ into $\mathcal H^N$ obtained by Zhang in~\cite{Zh} is a special case of our Theorem~\ref{main}.

\begin{theorem}
Suppose $N\leq 2n-2$. Then every local Segre map from $\mathcal H^n$ into $\mathcal H^N$ is either null or quasi-standard.
\end{theorem}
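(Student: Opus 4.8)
The plan is to recognize this theorem as an immediate specialization of Theorem~\ref{main}, with the translation supplied by Lemma~\ref{Segre}. First I would invoke Lemma~\ref{Segre} to pass from a local holomorphic Segre map $F$ from $\mathcal H^n$ into $\mathcal H^N$ to a local orthogonal pair $(f_1,f_2)$ from $\mathbb P^{1,n}$ to $\mathbb P^{1,N}$. This step rests on the Cayley-transform identification already recorded above, under which the Heisenberg hypersurface $\mathbb H^n$ becomes the set of null points of $\mathbb P^{1,n}$ in an affine chart, and the Segre foliation of $\mathcal H^n$ by the varieties $Q_\xi$ and $\hat Q_z$ becomes the foliation by orthogonal complements. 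Lemma~\ref{Segre} precisely converts the Segre incidence conditions $f_1(Q_\xi)\subset\hat Q_{f_2(\xi)}$ and $f_2(\hat Q_z)\subset Q_{f_1(z)}$ into the orthogonality condition $\langle f_1(z),f_2(w)\rangle_{1,N}=0$ whenever $\langle z,w\rangle_{1,n}=0$.

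Next I would match the numerical parameters. In the notation of Theorem~\ref{main} we set $(r,s)=(1,n)$ and $(r',s',t')=(1,N,0)$, so that $\dim(\mathbb P^{r,s})=r+s-1=n$ and $r'+s'=N+1$. The codimension hypothesis $r'+s'\le 2\dim(\mathbb P^{r,s})-1$ of Theorem~\ref{main} then reads $N+1\le 2n-1$, which is exactly the hypothesis $N\le 2n-2$ of the present statement. Thus the assumption here is precisely what is needed to invoke Theorem~\ref{main}, with no slack: the inequality is saturated at $N=2n-2$.

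Applying Theorem~\ref{main} to the orthogonal pair $(f_1,f_2)$ yields that it is either null or quasi-standard. I would then transport this dichotomy back across the correspondence of Lemma~\ref{Segre}: a null orthogonal pair corresponds to a null Segre map, and a quasi-standard orthogonal pair corresponds to a quasi-standard Segre map, since the notions of null and quasi-standard are defined intrinsically in terms of the orthogonality structure that the identification preserves. This completes the argument.

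I do not expect any substantive obstacle here; the content is entirely in Theorem~\ref{main} and Lemma~\ref{Segre}. The only points requiring care are bookkeeping ones: confirming that the Cayley transform produces the signatures $(1,n)$ and $(1,N)$ in the correct order, and checking that the intrinsic notions of ``null'' and ``quasi-standard'' for the orthogonal pair agree with the corresponding notions for the Segre map under the identification. Because the dimensional inequality is tight, I would verify the arithmetic $N+1\le 2n-1 \Leftrightarrow N\le 2n-2$ explicitly to ensure the hypotheses align exactly.
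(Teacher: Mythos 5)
Your proposal is correct and follows exactly the paper's own (one-line) proof: apply Lemma~\ref{Segre} to convert the Segre map into a local orthogonal pair from $\mathbb P^{1,n}$ to $\mathbb P^{1,N}$, check that $N\le 2n-2$ is equivalent to the hypothesis $r'+s'=N+1\le 2n-1=2\dim(\mathbb P^{1,n})-1$ of Theorem~\ref{main}, and conclude. Your version merely spells out the bookkeeping that the paper leaves implicit.
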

\begin{proof}
It follows from Theorem~\ref{main} and Lemma~\ref{Segre}.
\end{proof}

\section{Appendix}

The following result about the estimate of the dimension of the holomorphic mappings is the special case of main theorem in \cite{GN2} which is proved by using Green hyperplane restriction Theorem. In order to make the paper more self-contained, we give an analytic proof here, which is inspired by \cite{Fa1}.

\begin{lemma}\label{analysis lemma}
 Let $U\subset\mathbb P^m$ be a connected open set and $F:U\rightarrow\mathbb P^{m'}$ be a holomorphic map. Suppose $F$ maps $l$-planes to $l'$-planes, where $1\leq l \leq m-1$.

 $(i).$ If $l'\leq 2l-1$, then $F$ maps $(l+k)$-planes into $(l'+k)$-planes for $k\geq 0$.

$(ii).$ If $l'\leq l$, then either $F$ is linear or the image of $F$ is contained in an $l'$-plane.

\end{lemma}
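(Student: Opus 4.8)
The plan is to phrase everything through the span-dimension function $n_k$, defined for $0\le k\le m$ as the dimension of the linear span $L(P)$ of $F(P\cap U)$ for a general $k$-plane $P\subset\mathbb P^m$ (so $n_0=0$, $n_m=\dim\operatorname{span}F(U)$, and $k\mapsto n_k$ is non-decreasing by semicontinuity). In this language the hypothesis reads $n_l\le l'$, conclusion $(i)$ reads $n_{l+k}\le l'+k$, and $(ii)$ concerns the increments $\delta_k:=n_{k+1}-n_k$. I would first record two elementary facts, both proved by sweeping a $(k+2)$-plane with the pencil of $(k+1)$-planes through a common $k$-plane $\Lambda$. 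Fact A (saturation): if $\delta_{k_0}=0$, then $L(\Lambda)$ already equals the span of every member of the pencil, so $n_j$ is constant for $j\ge k_0$ and $F(U)$ lies in an $n_{k_0}$-plane. Fact B: if $n_1=1$, then a general line, hence (by the identity theorem) every line, maps into a line, so $F$ is the restriction of a projective-linear map; in particular $F$ is linear.

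Given these, part $(ii)$ follows cleanly and does not even use $(i)$. With $l'\le l$, inspect $\delta_0,\dots,\delta_{l-1}$. If some $\delta_{k_0}=0$ with $k_0\le l-1$, Fact A puts the image in an $n_{k_0}$-plane with $n_{k_0}\le n_l\le l'$, i.e.\ inside an $l'$-plane. Otherwise every $\delta_k\ge 1$, whence $n_l\ge l$; combined with $n_l\le l'\le l$ this forces $n_l=l'=l$ and $\delta_k=1$ for all $k<l$, so $n_1=1$ and Fact B makes $F$ linear.

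For part $(i)$ I would reduce to a single incremental step and then iterate. The step is the Base Claim: if $n_l\le l'$ and $l'\le 2l-1$, then $n_{l+1}\le l'+1$ (i.e.\ $\delta_l\le 1$). Granting it, one re-applies it with $(l+1,l'+1)$ in place of $(l,l')$---the hypothesis persists since $l'+1\le 2l\le 2(l+1)-1$---to obtain $n_{l+k}\le l'+k$ for all $k\ge 0$. To attack the Base Claim I restrict to a general $(l+1)$-plane $\Pi$, so that we may assume $m=l+1$, and sweep $\Pi$ by the pencil $\{P_t\}_{t\in\mathbb P^1}$ of $l$-planes through a common $(l-1)$-plane $\Lambda$. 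Passing to the quotient by $V:=L(\Lambda)$ turns the problem into: a holomorphic family $\bar L_t$ of $a$-dimensional subspaces, where $a:=\delta_{l-1}$, has total span $\sum_t\bar L_t$ of dimension at most $a+1$. Using a local holomorphic lift $\tilde F$ and the coordinates $P_t=\{z_2=tz_1\}$, the generators of $\bar L_t$ are the images mod $V$ of the derivatives $\partial_{z_1}^i\partial_{z_2}^j\partial_y^\beta\tilde F$, which are polynomial in $t$, the first-order (in $z_1$) generators being affine-linear in $t$.

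The main obstacle is exactly this quantitative step, and it is where the codimension bound $l'\le 2l-1$ is indispensable. A priori a holomorphic family of $a$-planes spans up to $2a$ dimensions; the content of the Base Claim is that the constraint $\dim\bar L_t\equiv a$ together with the low target dimension forbids the fast-moving directions (in matrix-pencil language, the minimal-column-index blocks), leaving only a pencil-type motion and hence total span $\le a+1$. That a degree-$\ge 2$ phenomenon is genuinely excluded is visible from the Veronese map, for which $\delta_l=l+2>1$ precisely when $l'=n_l$ violates $l'\le 2l-1$; this is the extremal case the hypothesis rules out. I would carry out the exclusion either analytically, by a Wronskian/rank computation on the polynomial-in-$t$ generators (the argument inspired by \cite{Fa1}), or, more transparently, by a duality reduction: project the target generically so the restricted map becomes a non-degenerate $G'\colon\mathbb P^{l+1}\dashrightarrow\mathbb P^{l'+1}$ sending hyperplanes into hyperplanes, and then show such a $G'$ must be linear, contradicting $n_{l+1}\ge l'+2$. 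Either route reproduces, in this special case, the Green-hyperplane-restriction estimate of \cite{GN2}.
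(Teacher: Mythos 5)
Your part $(ii)$ is correct and is essentially the paper's argument in a cleaner dress: your dichotomy on the increments $\delta_k$ (some $\delta_{k_0}=0$ forces the image into an $n_{k_0}$-plane via the pencil-sweeping Fact A; all $\delta_k=1$ forces $n_1=1$ and hence linearity) is exactly the paper's downward induction on $d=\dim L(\mathcal L_1\cap\mathcal L_2)$, and it correctly avoids any appeal to $(i)$. Likewise your reduction of $(i)$ to the single incremental step $n_{l+1}\le l'+1$ and the observation that the hypothesis $l'\le 2l-1$ persists under $(l,l')\mapsto(l+1,l'+1)$ match the paper's induction.

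The problem is that you never prove the Base Claim, and you say so yourself: ``the main obstacle is exactly this quantitative step,'' followed by two routes you ``would'' take. This is precisely the point where $l'\le 2l-1$ must do its work, so the heart of part $(i)$ is missing. Neither sketch closes it as stated: the assertion that ``a holomorphic family of $a$-planes spans up to $2a$ dimensions'' is false a priori (a family $\bar L_t=\mathbb{C}\cdot(1,t,\dots,t^N)$ of lines spans $N+1$ dimensions), so the reduction to ``only pencil-type motion survives'' is exactly the unproven content; and the duality route (``a non-degenerate $G'\colon\mathbb P^{l+1}\dashrightarrow\mathbb P^{l'+1}$ sending hyperplanes into hyperplanes must be linear'') is not a true statement in the generality you invoke it (the conic $t\mapsto[1,t,t^2]$ sends hyperplanes of $\mathbb P^1$ into hyperplanes of $\mathbb P^2$ and is not linear), so it would itself need the same kind of quantitative input. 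For comparison, the paper closes this gap by an osculating-space argument: assuming a general $(l+1)$-plane $E$ has image spanning dimension $\ge l'+2$, polarization produces $l'+1-l$ extra pure-power derivatives $D^{\alpha_j}F(x)(v_j^{|\alpha_j|})$ with $v_j\in T_xE$ beyond $DF(T_xE)$; since $l'+1-l\le l$ (this is where $l'\le 2l-1$ enters), all the $v_j$ fit into the tangent space of a single $l$-plane $\mathcal L\subset E$ through $x$, whose image then spans dimension $\ge l'+1$, contradicting the hypothesis. (The paper also treats separately, in its Case II, the situation where the Jacobian has rank $\le l$, which your blanket restriction to a general $(l+1)$-plane glosses over, though that case is easy.) To complete your write-up you would need to supply an actual proof of the Base Claim, e.g.\ by adapting this derivative-packing argument to your quotient-by-$V$ formulation.
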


\begin{proof}
At first, we will prove that $F$ maps $(l+1)$-planes into $(l'+1)$-planes. Let $q$ be the maximum of the rank of the Jacobian matrix $DF$ of $F$. We consider two cases:

\textbf{(Case I: $q\geq l+1$)} In this case, by restricting to a general $q$-dimensional linear subspace (i.e. those with tangent spaces intersecting trivially with the kernel of $DF$ at some point), we get a map (still denoted by $F$) from $\mathbb P^q$ to $\mathbb P^{m'}$ mapping $l$-planes into $l'$-planes and such that at a general point, the Jacobian matrix of $F$ is of rank $q$. It suffices to show that this $F$ maps $(l+1)$-planes into $(l'+1)$-planes since it then follows that a general (and hence every) $(l+1)$-plane of $\mathbb P^m$ is mapped by (the original) $F$ into an $(l'+1)$-plane.

Suppose on the contrary the linear span of the image of a general $(l+1)$-plane $E\subset\mathbb P^q$ is of dimension at least $(l'+2)$.

Then, for a general point in $x\in E$, there exist multi-indices $\alpha_1,\cdots,\alpha_{l'+1-l} $ and holomorphic tangent vectors $v_1,\cdots, v_{l'+1-l}$ in $T_xE$ so that $DF(T_xE)$ is of dimension $l+1$ and
$$
DF(T_xE)+\textrm{Span}\{D^{\alpha_j}F(x)(v_j^{|\alpha_j|}): 1\leq j\leq l'+1-l\}
$$
is of dimension $l'+2$. Note that $l'+1-l\le l$. If we consider an $l$-plane $\mathcal L\subset E$ passing through $x$ such that $v_1$, $v_2$, $\cdots$, $v_{l'+1-l}$ are all contained in $T_x\mathcal L$, then the linear span of the image of $\mathcal L$ by $F$ contains
$$
DF(T_x\mathcal L)+\textrm{Span}\{D^{\alpha_j}F(x)(v_j^{|\alpha_j|}): 1\leq j\leq l'+1-l\},
$$
which is of dimension $l'+1$. Thus the image of the $l$-plane $\mathcal L$ is not contained in any $ l'$-plane, a contradiction. Therefore, $F$ maps $(l+1)$-planes into $(l'+1)$-planes.

\textbf{(Case II: $q< l+1$)} We can choose a point $x\in\mathbb P^m$ at which the rank of the Jacobian $DF$ is constantly $q$ in a neighborhood $U\ni x$ and such that there is a neighborhood $U'$ at $F(x)$ in which the image of $F(U)$ is a $q$-dimensional complex manifold in $U'$. Then, as $q\leq l$, for a general $l$-plane $\mathcal L$, we have $F(\mathcal L\cap U)=F(U)$, which by hypotheses, is contained in an $l'$-plane and thus it follows trivially that $F$ maps $(l+1)$-planes into $(l'+1)$-planes.

Combining \textbf{Case I} and \textbf{Case II}, we conclude that $F$ maps $(l+1)$-planes to $(l'+1)$-planes and thus by induction $F$ maps $(l+k)$-planes into $(l'+k)$-planes for $0\le k\le \min\{m-l,m'-l'\}$. The proof for part (i) is complete.

To prove part (ii), let $d$ be the dimension of the linear span of  the image of $\mathcal L_1\cap \mathcal L_2$ for a general pair of $l$-planes $\mathcal L_1$ and $\mathcal L_2$ such that intersection $\mathcal L_1\cap \mathcal L_2$ is an $(l-1)$-plane.  Thus, $d\leq l'$.

If $d=l'$, then the images of $\mathcal L_1$ and $\mathcal L_2$ must be contained in the same $l'$-plane. Since $\mathcal L_1$ and $\mathcal L_2$ are chosen generally, so $F$ must map every $l$-plane into this $l'$-plane. Therefore, the image of $F$ must be contained in this $l'$-plane.

If $d<l'$, then it means $F$ maps $\mathcal L_1\cap \mathcal L_2$ into an $(l'-1)$-plane. Since $\mathcal L_1, \mathcal L_2$ are chosen generally, it follows that $F$ maps $(l-1)$-planes to $(l'-1)$-planes.

Inductively, we deduce that either $F$ maps $(l-l'+1)$-planes to 1-planes (lines) or the image of $F$ is contained in an $k$-plane, $1\le k\le l'$.

If $l'=l$, then either $F$ maps lines to lines or the image of $F$
is contained in an $l'$-plane. Thus $F$ is linear or the image of
$F$ is contained in an $l'$-plane.

If $l'< l$, then either $F$ maps $(l-l')$-planes to 0-planes (points) or the image of
 $F$ is contained in an $l'$-plane. The first possibility implies that the image of
  $F$ is a single point. Thus, the image of $F$ must be contained in an $l'$-plane in
   any case.
 \end{proof}

\noindent\textbf{Acknowledgement.}
The author would like to thank Prof. Xiaojun Huang and Sui-Chung Ng for their  valuable comments on the first draft.
The author was partially supported by Institute of Marine Equipment, Shanghai Jiao Tong University and Shanghai Science and Technology Plan projects (No. 21JC1401900).

\end{document}